\documentclass[oneside]{amsart}
% -----------------------------------------------------------

\usepackage{amsmath, amsthm, amsfonts}
\usepackage{amssymb}
\usepackage[utf8]{inputenc}
\usepackage{latexsym}
\usepackage{verbatim}
\usepackage{url}
\usepackage{textcomp}
\usepackage[all,cmtip]{xy}
\usepackage{color}
\usepackage{hyperref}
\usepackage{tabularx}
\input{xypic}
\usepackage{enumerate}
\usepackage{amssymb}
\usepackage{array}
\usepackage{lscape}
\usepackage{booktabs}

% -----------------------------------------------------------
\DeclareMathAlphabet{\mathfr}{U}{euf}{m}{n}
% -----------------------------------------------------------

\newtheorem{theorem}{Theorem}[section]

\newtheorem{proposition}[theorem]{Proposition}
\newtheorem{corollary}[theorem]{Corollary}

\newtheorem{lemma}[theorem]{Lemma}

\theoremstyle{remark}
\newtheorem{remark}[theorem]{Remark}

% -----------------------------------------------------------

\newcommand{\etalchar}[1]{$^{#1}$}

\newcommand\acc[2]{\ensuremath{{}^{#1}\hskip-0.3ex{#2}}}

\newcommand{\ra}{\rightarrow}

\newcommand{\Q}{\mathbb Q}

\newcommand{\Qbar}{{\overline{\mathbb Q}}}
\newcommand{\Gal}{\mathrm{Gal}}

\newcommand{\Z}{\mathbb Z}

\newcommand{\M}{\mathrm{M }}

\newcommand{\Aut}{\mathrm{Aut}}
\newcommand{\End}{\operatorname{End}}
\newcommand{\Hom}{\operatorname{Hom}}
\newcommand{\Frob}{\operatorname{Frob}}
\newcommand{\Jac}{\operatorname{Jac}}

\newcommand{\p}{\mathfrak{p}}

\newcommand{\Tr}{\operatorname{Tr}}

\newcommand{\cO}{\mathcal{O}}

\newcommand{\Nm}{\operatorname{N}}

\newcommand{\cB}{\mathcal{B}}
\newcommand{\cA}{\mathcal{A}}
\newcommand{\Dm}{\mathcal{D}}
\newcommand{\Om}{\mathcal{O}}
\newcommand{\Pm}{\mathcal{P}}
\newcommand{\Qm}{\mathcal{Q}}
\newcommand{\cX}{\mathcal{X}}

\newcommand{\Ic}{\mathcal{I}}
\newcommand{\Pc}{\mathcal{P}}
\newcommand{\im}{\operatorname{im}}
\newcommand{\cyc}[1]{{\mathrm{C}_#1}}
\newcommand{\bigslant}[2]{{\raisebox{.2em}{$#1$}\left/\raisebox{-.2em}{$#2$}\right.}}

\numberwithin{equation}{section}

\newcommand{\QQ}{\mathbb{Q}}

\begin{document}
\title[{Endomorphism algebras of $\Qbar$-split genus $2$ Jacobians over $\Q$}]{Endomorphism algebras of geometrically\\ split genus 2 Jacobians over $\Q$}

\author{Francesc Fit\'e}
\address{Departament de Matemàtiques i Informàtica \\
Universitat de Barcelona and Centre de Recerca Matemàtica\\Gran via de les Corts Catalanes, 585\\
08007 Barcelona, Catalonia}
\email{ffite@ub.edu}
\urladdr{https://www.ub.edu/nt/ffite/}

\author{Enric Florit}
\address{Departament de Matemàtiques i Informàtica\\
Universitat de Barcelona\\Gran via de les Corts Catalanes, 585\\
08007 Barcelona, Catalonia}
\email{enricflorit@ub.edu}
\urladdr{https://enricflorit.com/}

\author{Xavier Guitart}
\address{Departament de Matemàtiques i Informàtica\\
Universitat de Barcelona and Centre de Recerca Matemàtica\\Gran via de les Corts Catalanes, 585\\
08007 Barcelona, Catalonia}
\email{xevi.guitart@gmail.com}
\urladdr{http://www.maia.ub.es/~guitart/}

\date{\today}

\date{\today}

\begin{abstract} The main result of \cite{FG20} classifies the 92 geometric endomorphism algebras of geometrically split abelian surfaces defined over $\Q$. We show that~$54$ of them arise as geometric endomorphism algebras of Jacobians of genus~$2$ curves defined over $\Q$, and that the remaining 38 do not. In particular, we exhibit 38 examples of $\Qbar$-isogeny classes of abelian surfaces defined over $\Q$ that do not contain any Jacobian of a genus $2$ curve defined over $\Q$, and for each of the 54 algebras that do arise we exhibit a curve realizing them.
\end{abstract}

\maketitle
%\tableofcontents

\section{Introduction}

Let $A$ be an abelian surface defined over $\Q$. We say that $A$ is \emph{geometrically split} if its base change $A_\Qbar$ to $\Qbar$ is isogenous to the product of two elliptic curves. By the \emph{geometric endomorphism algebra of $A$}, denoted $\cX(A)$, we refer to the ring of endomorphisms of~$A_\Qbar$ tensored with $\Q$. Let $\cA$ denote the set of isomorphism classes of $\cX(A)$, where $A$ runs over the set of geometrically split abelian surfaces over $\Q$. The finiteness of $\cA$ can be deduced from the theory of complex multiplication and Heilbronn's proof of the Gauss class number problem (see \cite[p. 103]{Sha96}).

The main result of \cite{FG20}, relying on previous results of \cite{FG18}, provides an explicit description of $\cA$. In order to state it, we need to introduce some notations. Denote by $\Dm_{1}$ (resp. $\Dm_{2}$) the finite set of absolute values of discriminants of imaginary quadratic fields of class number $1$ (resp. of class number~$2$). We also denote by~$\Dm_{2,2}$ the finite set of absolute values of discriminants of imaginary quadratic fields with class group isomorphic to $\cyc 2 \times \cyc 2$. By the Gauss class number problem, the sets $\Dm_{1}$, $\Dm_{2}$, and $\Dm_{2,2}$ are finite. Using the discriminant bounds of \cite{Wat03}, for example, one can determine these sets, which are as displayed on the table below.
\begin{table}[h]
\begin{center}
\begin{small}
\begin{tabular}{|c|c|}\hline
$\Dm_1$  &$3,4,7,8,11,19,43,67,163$\\ \hline
$\Dm_2$ & $ 15,20,24,35,40,51,52,88,91,115$\\
$ $ & $123,148,187,232,235,267,403,427$\\ \hline
$\Dm_{2,2}$ & $84,120,132,168,195,228,280,312,340,372,408,435$\\
$  $ &$483,520,532,555,595,627,708,715,760,795,1012,1435$\\\hline
\end{tabular}
\end{small}
\end{center}
\end{table}

Note that $\Dm_1$, $\Dm_2$, and $\Dm_{2,2}$ have cardinalities $9$, $18$, and $24$ respectively. Then the main result of \cite{FG20} is the following.

\begin{theorem}[\cite{FG20}]\label{theorem: FG20}
The set $\cA$ consists of:\vspace{0,2cm}
\begin{enumerate}[i)]
\item $\Q\times \Q$; $\Q\times \Q(\sqrt{-d})$ for $d\in \Dm_1$; $\Q(\sqrt{-d})\times \Q(\sqrt{-d'})$ for distinct $d,d'\in \Dm_1$;\vspace{0,2cm}
\item $\M_2(\Q)$; $\M_2(\Q(\sqrt{-d}))$ for $d\in \Dm_1 \cup \Dm_2 \cup \Dm_{2,2}^S$, where  
\begin{equation}\label{equation: DNakamura}
\Dm_{2,2}^S:=\Dm_{2,2}\setminus \{195, 312, 340, 555, 715, 760\}\,.
\end{equation}
\end{enumerate}
In particular, the set $\cA$ has cardinality 92.
\end{theorem}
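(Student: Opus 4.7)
The plan is to split the analysis according to whether the two elliptic factors of $A_\Qbar$ are $\Qbar$-isogenous, corresponding to parts (i) and (ii) of the theorem. A key elementary fact used throughout is that two CM elliptic curves with the same CM field are automatically $\Qbar$-isogenous.

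\emph{Non-isogenous case.} Suppose $A_\Qbar \sim E_1 \times E_2$ with $E_1$ and $E_2$ non-isogenous over $\Qbar$. Then $\cX(A) \cong F_1 \times F_2$, where each $F_i = \End(E_i) \otimes \Q$ is either $\Q$ or an imaginary quadratic field. Non-isogeny forces $F_1 \not\cong F_2$, so the $\Gal(\Qbar/\Q)$-action on $\cX(A)$ cannot swap the factors, and each geometric isogeny class $[E_i]$ is individually Galois-stable. Each $E_i$ therefore admits a $\Q$-model up to isogeny, which for a CM elliptic curve is equivalent to $j(E_i) \in \Q$, i.e., to the CM field having class number~$1$. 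This produces exactly the algebras listed in part (i); each is realized by an explicit product of (CM or non-CM) elliptic curves defined over $\Q$.

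\emph{Isogenous case.} Now assume $A_\Qbar \sim E^2$, so $\cX(A) = \M_2(F)$ with $F = \End(E) \otimes \Q$. The subcase $F = \Q$ gives $\M_2(\Q)$, realized by $E_0 \times E_0$ for any non-CM $E_0/\Q$. If $F = K = \Q(\sqrt{-d})$ and $E$ has CM by an order $\cO \subset K$ of class number $h$, the surface $A$ is a $\Q$-form of $E^2$; such forms are classified cohomologically and correspond concretely to quaternion algebras $B/\Q$ satisfying $B \otimes_\Q K \cong \M_2(K)$, together with compatible descent data. Since the field of moduli of $E$ is the ring class field $H_\cO$, of degree $2h$ over $\Q$, every such $A$ arises, possibly after a quadratic twist, from a Weil restriction $\Res_{L/\Q}(E')$ with $L \subset H_\cO$ of degree at most $2$ over $\Q$. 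A dimension count shows this is possible whenever $h \leq 2$, accounting for the admissible discriminants in $\Dm_1 \cup \Dm_2$. When $\mathrm{Cl}(\cO) \cong \cyc 2 \times \cyc 2$, a more elaborate construction using quadratic characters of $\Gal(H_\cO/\Q)$ provides the further admissible discriminants in $\Dm_{2,2}^S$.

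\emph{Main obstacle.} The technical heart of the argument is to establish the \emph{non}-existence for the six discriminants in $\Dm_{2,2} \setminus \Dm_{2,2}^S$. For each such $d$ one must show that the quaternion algebra $B$ required by any putative descent of $E^2$ to $\Q$ would have to ramify at a set of places incompatible with the splitting condition $B \otimes_\Q K \cong \M_2(K)$. The proof is local and arithmetic: one analyses the splitting behaviour of the rational primes dividing $d$ in the various quadratic subfields of $H_\cO$ and shows that the resulting obstruction in $\Br(\Q)$ cannot be cancelled by any admissible twist. The remaining direction, realization of each of the $92$ algebras, is achieved through a systematic catalogue of constructions: products, Weil restrictions of CM elliptic curves, and quadratic twists. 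I expect the cohomological ramification analysis for the excluded discriminants to be the principal difficulty; the rest is a fairly systematic case-by-case verification.
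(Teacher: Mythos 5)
The statement you are proving is quoted from \cite{FG20}; the present paper only recalls the shape of that proof (for the hard case $d\in\Dm_{2,2}$: first, any $A$ with $\cX(A)\simeq\M_2(\Q(\sqrt{-d}))$ occurs as an isogeny factor of the restriction of scalars of a Gross $\Q$-curve from the Hilbert class field $H$ of $\Q(\sqrt{-d})$, with all endomorphisms defined over $H$; second, the isogeny decomposition of that restriction of scalars is computed by Nakamura's method, and the six excluded discriminants fall out of that computation). Measured against this, your proposal has two genuine gaps. In the non-isogenous case, the inference ``each geometric isogeny class $[E_i]$ is Galois-stable, therefore $E_i$ admits a $\Q$-model up to isogeny'' is false as stated: the $\Qbar$-isogeny class of \emph{every} CM elliptic curve is Galois-stable (conjugation preserves the CM order, and all curves with CM by the same field are $\Qbar$-isogenous), yet most CM elliptic curves --- e.g.\ Gross $\Q$-curves attached to class number $>1$ --- have no model over $\Q$. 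What actually closes this case is that the two idempotents of $\cX(A)\simeq F_1\times F_2$ are Galois-invariant (the factors being non-isomorphic as algebras), hence lie in $\End(A)\otimes\Q$, so $A$ splits over $\Q$ into a product of two elliptic curves over $\Q$; a CM elliptic curve over $\Q$ has rational $j$-invariant, which forces class number~1. Your conclusion is correct, but the stated reason is the classical $\Q$-curve fallacy.

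More seriously, the ``main obstacle'' paragraph does not contain a workable mechanism. The splitting condition $B\otimes_\Q K\cong\M_2(K)$ is satisfied by infinitely many quaternion algebras $B$ over $\Q$ for any imaginary quadratic $K$ (any $B$ ramified only at places that do not split in $K$), so there is no incompatibility to be extracted at the level of $\Br(\Q)$ and local splitting behaviour of primes dividing $d$; the obstruction is not of that shape. In \cite{FG20} the exclusion of $\{195,312,340,555,715,760\}$ rests on (a) the structural result that any such $A$ sits inside $\Res_{H/\Q}$ of a Gross $\Q$-curve, and (b) an explicit computation, via Nakamura's cocycle data, of how that eight-dimensional restriction of scalars decomposes up to isogeny --- the six discriminants are exactly those for which no $\M_2(K)$ factor appears. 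Your proposal gives no route to recovering this list. Similarly, the ``dimension count'' for $h\le 2$ and the closing ``systematic catalogue'' do not establish realizability of the 92 algebras; that direction requires the explicit constructions (products, quadratic twists, restrictions of scalars from the real subfield $H^+$, and the class-number-4 analysis) that \cite{FG20} actually carries out.
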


The challenging part of the proof of the above result is that involving quadratic imaginary fields of class number $4$. 
In this case, the proof of the above theorem takes place in two steps. Suppose that $A$ is an abelian surface for which $\cX(A)\simeq \M_2(\Q(\sqrt{-d}))$ for $d\in \Dm_{2,2}$. First, one establishes that $A$ occurs as a factor of the restriction of scalars of a Gross $\Q$-curve from the Hilbert class field of $\Q(\sqrt{-d})$ down to $\Q$. Second, one determines the isogeny decomposition of any such restriction of scalars using a method due to Nakamura \cite{Nak04}. Then the set $\Dm_{2,2}^S$ is obtained as a corollary of this computation. Note that a consequence of the first step in the proof is that for such an $A$ the minimal field over which all endomorphisms of $A$ are defined is the Hilbert class field of $\Q(\sqrt{-d})$, a result that will again play a key role in the present work.   

In this article we consider the question (see \cite[Question 4.4]{FG20}) of determining the subset $\cB\subseteq \cA$ of geometric endomorphism algebras of Jacobians of genus~2 curves defined over $\Q$. We obtain a complete answer to this question.

\begin{theorem}\label{theorem: main}
The set $\cB$ consists of:\vspace{0,2cm}
\begin{enumerate}[i)]
\item $\Q\times \Q$; $\Q\times \Q(\sqrt{-d})$ for $d\in \Dm_1$; $\Q(\sqrt{-d})\times \Q(\sqrt{-d'})$ for
$$
(d,d')\in \{(4,3)\,,(7,4)\,,(8,4)\}\,.
$$
\item $\M_2(\Q)$, $\M_2(\Q(\sqrt{-d}))$ for $d\in \Dm_1 \cup \Dm_2 \cup \Dm_{2,2}^J$, where 
\begin{equation}\label{equation: DNarbonneRitzenthaler}
\Dm_{2,2}^J :=\Dm_{2,2}^S\setminus \{ 280,  483,  595, 627, 795\}\,.
\end{equation}
\end{enumerate}
In particular, the set $\cB$ has cardinality $54$.
\end{theorem}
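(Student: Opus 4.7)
The proof plan splits into two independent halves: a constructive direction, where we exhibit, for each of the 54 algebras listed in $\cB$, a genus 2 curve $C/\Q$ realizing it as $\cX(\Jac C)$, and a negative direction, where we exclude the remaining 38 algebras of $\cA$.

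For the positive direction, the plan is to exhibit explicit genus 2 curves drawn from the LMFDB database of genus 2 curves and from known parametric families engineered to produce specific endomorphism types (for instance, Hashimoto-Murabayashi-style families for curves with potential quaternionic multiplication realizing the algebras $\M_2(\Q(\sqrt{-d}))$). For each candidate curve we will certify the geometric endomorphism algebra using rigorous endomorphism-computation algorithms that work over a sufficient extension and then descend. This is largely a computational task, though it requires significant search and verification.

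The negative direction itself splits into two subcases: (i) ruling out $\Q(\sqrt{-d})\times \Q(\sqrt{-d'})$ for the 33 pairs $(d, d')$ of distinct elements of $\Dm_1$ other than $(4,3)$, $(7,4)$, $(8,4)$, and (ii) ruling out $\M_2(\Q(\sqrt{-d}))$ for $d \in \{280,483,595,627,795\}$. For (i), I would argue that if $\Jac(C)$ is $\Qbar$-isogenous to $E \times E'$ with $E, E'$ having CM by distinct imaginary quadratic fields, then $E$ and $E'$ are not $\Qbar$-isogenous and so cannot be Galois conjugate, forcing both to descend up to $\Q$-isogeny to CM elliptic curves over $\Q$. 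The existence of such a $C$ is then governed by the Frey-Kani correspondence: it amounts to an anti-symplectic isomorphism $E[n] \simeq E'[n]$ for some $n \ge 2$, and for $C$ itself to be defined over $\Q$ this isomorphism must be $\Gal(\Qbar/\Q)$-equivariant. Since the Galois action on torsion of CM elliptic curves is made explicit by class field theory, the set of admissible pairs $(d, d')$ can be enumerated case by case.

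For (ii), the plan is to invoke the structural result established in the proof of Theorem \ref{theorem: FG20}: such an $A$ arises as a factor of the Weil restriction $\Res_{H/\Q} E$ of a Gross $\Q$-curve $E/H$ from the Hilbert class field $H$ of $\Q(\sqrt{-d})$, and $H$ equals the minimal field of definition of $\End(A_\Qbar)$. The requirement that $A$ be a Jacobian then imposes the existence of a principal $\Q$-rational polarization of Jacobian (as opposed to product) type. By analysing the $\Gal(H/\Q) \simeq (\Z/2\Z)^3$-action on the Néron-Severi lattice of $A_H$, I would classify the principal polarizations descending to $\Q$ and check, discriminant by discriminant, that for the five values $d \in \{280, 483, 595, 627, 795\}$ all admissible polarizations split as products of elliptic-curve polarizations, yielding only singular and hence non-Jacobian abelian surfaces. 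The main obstacle will be this polarization-descent analysis, which requires careful bookkeeping of the Galois action on the relevant geometric invariants.
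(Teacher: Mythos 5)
Your overall architecture matches the paper's, but both halves of your negative direction have concrete problems. In subcase (i), reducing via Frey--Kani to a Galois-equivariant isomorphism $E[N]\simeq E'[N]$ is the right move, but you never explain how to make the enumeration finite: $N$ ranges over all integers $\geq 2$, and ``class field theory makes the torsion action explicit'' does not by itself bound $N$. The mechanism the paper uses (and that you are missing) is a trace comparison at primes $p$ split in one CM field and inert in the other: there $a_p(E')=0$ while $a_p(E)=\pm\Tr_{M/\Q}(\pi)$ is independent of the twist up to sign (for $d\neq 3,4$), so $E[N]\simeq E'[N]$ forces $N\mid \Tr_{M/\Q}(\pi)$; exhibiting two such primes with the same nonzero trace $a$ then pins $N$ down to the prime divisors of $a$, which are handled by explicit $2$- and $3$-torsion field computations. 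Without some such quantitative step your ``case by case'' enumeration is over an infinite set.

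In subcase (ii) your proposed obstruction is wrong for three of the five discriminants. You predict that for $d\in\{280,483,595,627,795\}$ every $\Q$-rational principal polarization is of product type, so no Jacobian arises. That is what happens for $d=483,627$, but for $d\in\{280,595,795\}$ there \emph{do} exist indecomposable principal polarizations on $E\times E'$ whose class is Galois-stable, i.e.\ genus $2$ curves over $\Qbar$ with field of moduli $\Q$ and Jacobian the relevant surface. The actual obstruction is that these curves fail to descend: their Mestre obstruction is nontrivial over $\Q$ (and trivial over $\Q(\sqrt{-d})$), so the field of moduli is strictly smaller than every field of definition. A pure analysis of the Galois action on the N\'eron--Severi lattice detects only Galois-stability of the polarization class and cannot see this Brauer-type obstruction, so your plan as stated would wrongly conclude that $d=280,595,795$ are realizable. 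Two further points: a factor of $\Res_{H/\Q}E$ is only determined up to isogeny, and principal polarizations do not transfer along isogenies, so before any polarization analysis you need the paper's intermediate step (via Kani and Jordan et al.) showing the surface is $\Qbar$-\emph{isomorphic} to a product of elliptic curves with CM by the \emph{maximal} order; and on the positive side, the curves realizing $\M_2(\Q(\sqrt{-d}))$ for $d\in\Dm_{2,2}^J$ are not in the LMFDB and are not quaternionic-multiplication surfaces --- they are squares of CM curves with class group $\cyc 2\times\cyc 2$, and producing and certifying them is a substantial task in its own right (the paper uses externally supplied candidates certified via Kumar's models of the Hilbert modular surfaces $Y_-(n^2)$).
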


The interesting cases of the proof of the above result are: a) that involving the product of two distinct quadratic imaginary fields; b) that involving a quadratic imaginary field of class number $4$. The remaining cases are handled in \S\ref{section: basic cases}, a section included for the sake of completeness, but whose results are well known to the experts. We do not claim any originality in this section, except maybe for the precise form of Proposition~\ref{proposition: classnumber2}. 
This result provides a Galois group theoretic sufficiency criterion for applying the explicit gluing method of Howe, Leprévost, and Poonen \cite{HLP00} to a CM elliptic curve over a quadratic field and its Galois conjugate along the $2$-torsion. It allowed us finding some curves attached to quadratic imaginary fields of class number 2 that we were not able to locate anywhere else in the literature.

We effectuate the proof of Theorem \ref{theorem: main} in the aforementioned case a) in \S\ref{section: nonisogenousprod}. The proof relies on a criterion by Frey and Kani \cite{FK91} that recasts the possibility of gluing two elliptic curves $E$ and $E'$ into a genus~$2$ curve in terms of the existence of an isomorphism of Galois modules $E[N]\simeq E'[N]$, for some $N\geq 2$, that is moreover an anti-isometry for the Weil pairings. We exploit the idea that, due to the presence of a large proportion of 0 Frobenius traces, only under exceptional circumstances may one find CM elliptic curves $E$ and $E'$ nonisogenous over $\Qbar$ such that $E[N]$ and $ E'[N]$ are isomorphic as Galois modules for some $N\geq 2$. In fact, to narrow down the possibilities for $(d,d')$ to one of $(4,3)$, $(7,4)$, or $(8,4)$ we make no use of the anti-isometry requirement. This idea is made precise via a Chebotarev style type of argument.

The proof of Theorem \ref{theorem: main} in the aforementioned case b) takes place in \S\ref{section: squares}. Suppose that $C$ is a genus $2$ curve for which $\cX(\Jac(C))\simeq \M_2(\Q(\sqrt{-d}))$ for $d\in \Dm_{2,2}$. Combining the already mentioned result from \cite{FG20} that the minimal field of definition over which all the endomorphisms of $\Jac(C)$ are defined is the Hilbert class field of $\Q(\sqrt{-d})$ with \cite{JKPRST18} and \cite{Kan11}, we infer that $\Jac(C)$ is $\Qbar$-\emph{isomorphic} to the product of two elliptic curves with complex multiplication by the \emph{maximal} order of $\Q(\sqrt{-d})$. The necessity for~$d$ to belong to $\Dm_{2,2}^J$ then is deduced from the recent work by Narbonne \cite{Nar22}. It then remains to show that the algebras $\M_2(\Q(\sqrt{-d}))$ for every $d\in \Dm_{2,2}^J$ actually occur as geometric endomorphism algebras of genus 2 curves defined over $\Q$. For every $d\in \Dm_{2,2}^J$, a heuristic candidate curve was communicated to us either by Elkies or by Narbonne--Ritzenthaler. Finally, we certify the correctness of these curves using the work of Kumar \cite{Kum15}.

It follows from \cite{Wei57} and the aforementioned result by Frei and Kani that any $\Qbar$-isogeny class of abelian surfaces defined over~$\Qbar$ contains the Jacobian of a curve defined over $\Qbar$. As a byproduct of Theorems \ref{theorem: FG20} and \ref{theorem: main}, we obtain 38 distinct $\Qbar$-isogeny classes of abelian surfaces containing an abelian surface defined over $\Q$ but no Jacobian of a curve defined over $\Q$.

Some of the computations in this article are done with the help of mathematical software SageMath \cite{sage} and MAGMA \cite{BCP97}. All code files that we have used can be found at the repository \url{https://github.com/xguitart/EndAlgebrasg2}.

\subsection*{Notation and terminology} Throughout the paper $\Qbar$ will denote a fixed algebraic closure of $\Q$. All number fields considered are assumed to be contained in $\Qbar$. For a number field $F$, let $\cO_F$ denote its ring of integers and let $G_F$ be the absolute Galois group $\Gal(\Qbar/F)$. For an abelian variety $A$ defined over $F$ and an integer $N\geq 2$, let $A[N]$ denote the set of $\Qbar$-points of the $N$-torison group scheme of $A$. For a prime $\ell$, let 
$V_\ell(A):= \Q_\ell\otimes \varprojlim_n A[\ell^n]$ be the rational $\ell$-adic Tate module of~$A$, and denote by $\varrho_A:=\varrho_{A,\ell}$ the $\ell$-adic representation encoding the action of $G_F$ on~$V_\ell(A)$. For a nonzero integral prime~$\p$ of $\cO_F$ of good reduction for $A$, define~$a_\p(A)$ as the trace $\Tr(\varrho_{A}(\Frob_\p))$, where $\Frob_\p$ is a Frobenius element at~$\p$. We will denote by $\overline\varrho_{A,N}$ the representation given by the action of~$G_F$ on~$A[N]$. We will denote by $F(A[N])$ the field obtained by adjoining the coordinates of the points of $A[N]$ to $F$ (equivalently, $F(A[N])$ is the field cut out by the representation $\overline\varrho_{A,N}$). We will denote by $F(x(A[N]))$ the field obtained by adjoining the $x$-coordinates of the points of $A[N]$ to $F$. The geometric endomorphism algebra of~$A$ will be denoted by $\cX(A)$. If $C$ is a curve, we will simply write $\cX(C)$ to denote $\cX(\Jac(C))$. If $B$ is another abelian variety defined over $F$, then we write $A\sim B$ (resp. $A\simeq B$) to denote that $A$ and $B$ are isogenous (resp. isomorphic). We say that an elliptic curve $E$ has complex multiplication (CM) by a quadratic imaginary field $M$ if it has CM by some order~$\cO$ in $M$. We use this phrasing when the specific choice of~$\cO$ is irrelevant for the argument involved.

\subsection*{Acknowledgements} We are indebted to several colleagues for contributions that made possible this work. Thanks to N. Elkies for providing a key argument to complete the proof of Proposition \ref{proposition: just one prime} and for supplying some extraordinary genus~2 curves that motivated the start of this article. Thanks to A. Sutherland for sharing a data base of pairs of isomorphic torsion Galois modules of elliptic curves (with overwhelming evidence that Theorem \ref{theorem: three pairs} should be true) and for granting us access to the MIT server Legendre, where some of the computations of this article were carried out. Thanks to F. Narbonne for sharing with us the work \cite{Nar22}, essential in the proof of Proposition \ref{proposition: laststep}. Thanks to F. Narbonne and C. Ritzenthaler for supplying some extraordinary genus 2 curves that facilitated the completion of this article. Thanks to E. Costa, E. Howe, and S. Schiavone for helpful conversations. Thanks to the LMFDB as a source of examples used in \S\ref{equation: 3pairs}. Guitart expresses his gratitude to the Massachusetts Institute of Technology for its warm hospitality during a visit in January 2020. Fit\'e was financially supported by the Ram\'on y Cajal fellowship RYC-2019-027378-I and the Simons Foundation grant 550033. Florit and Guitart were partially
funded by project PID2019-107297GB-I00. Florit was partially supported by the Spanish Ministry of Universities (FPU20/05059). Fité and Guitart acknowledge support from the Mar\'ia de Maeztu Program of excellence CEX2020-001084-M.

\section{The basic cases}\label{section: basic cases}

In this section we treat those cases of geometric endomorphism algebras involving at most one quadratic imaginary field, which we further assume of class number $\leq 2$. Along the way, we present a gluing construction of elliptic curves that will be also used in Section \ref{section: nonisogenousprod}.

\subsection{Class number 1}

Examples of genus 2 curves $C$ defined over $\Q$ for which $\cX(C)\simeq \Q\times \Q$ or $\M_2(\Q)$ are well known (see for example the rows for $N(G_{3,3})$ and~$E_1$ of \cite[Table 11]{FKRS12}, respectively). 
The next proposition provides examples of genus 2 curves $C$ defined over $\Q$ such that $\cX(C)\simeq \Q\times M$, where $M$ is an imaginary quadratic field of class number $1$.

\begin{proposition}\label{proposition: QxM}
For each $d\in \Dm_1$, consider the pair $(a,b)\in \Z^2$ given by Table \ref{table: ab}. If $C$ denotes the curve given by $C\colon y^2=x^6+ax^2+b$, then $\cX(C)\simeq \Q \times \Q(\sqrt{-d})$.
\begin{table}[h]
\begin{center}
\begin{small}
\begin{tabular}{|rr|rr|rr|}\hline
$(a,b)$ & $d$ & $(a,b)$ & $d$ & $(a,b)$ & $d$\\\hline
$(-15,22)$ & $3$ & $( -30, -56) $ & $8$ & $(-3440, 77658) $ & $43$\\
$(-11,14)$ & $4$ & $(-264, 1694) $ & $11$ & $(-29480, 1948226)$ & $67$\\
$(-35,98)$ & $7$ & $(-152, 722)$ & $19$ & $(-8697680, 9873093538)$ & $163$\\\hline
\end{tabular}
\vspace{6pt}
\caption{Coefficients for genus 2 curves $C$ with $\cX(C)\simeq \Q\times \Q(\sqrt{-d})$.}\label{table: ab}
\end{small}
\end{center}
\end{table}\vspace{-0.6cm}
\end{proposition}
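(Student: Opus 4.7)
The plan is to exploit the bielliptic structure of $C$. The curve $C\colon y^2=x^6+ax^2+b$ carries the involution $\sigma\colon(x,y)\mapsto(-x,y)$ defined over $\Q$; together with the hyperelliptic involution $\iota\colon(x,y)\mapsto(x,-y)$, this produces two elliptic quotients of $C$. Using the $\sigma$-invariants $x^2,y$ and the $\iota\sigma$-invariants $x^2,xy$, and applying the further change of variables $u=b/z$, $v=bw/z^2$ to the genus-$1$ model $v^2=u(u^3+au+b)$ obtained from the latter, one arrives at the explicit Weierstrass models
\begin{equation*}
E_1\colon y^2=x^3+ax+b,\qquad E_2\colon y^2=x^3+ax^2+b^2.
\end{equation*}
By the classical bielliptic decomposition (see e.g.\ \cite{FK91}), the two quotient maps $C\to E_i$ induce a $\Q$-isogeny $\Jac(C)\sim E_1\times E_2$; hence
\[
\cX(C)\simeq\bigl(\End_\Qbar(E_1)\otimes\Q\bigr)\times\bigl(\End_\Qbar(E_2)\otimes\Q\bigr)
\]
provided $E_1\not\sim_\Qbar E_2$.

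What remains is a finite verification performed row by row over Table~\ref{table: ab}. For each pair $(a,b)$, one computes $j(E_1)$ and $j(E_2)$ directly from the above Weierstrass equations and checks that exactly one of the two is a rational CM $j$-invariant associated with $\Q(\sqrt{-d})$, while the other is non-integral (and hence corresponds to a non-CM elliptic curve). For example, with $(a,b)=(-11,14)$ (the row $d=4$) one finds $j(E_1)=287496=66^3$, the $j$-invariant of CM by $\Z[2i]\subset\Q(i)$, while $j(E_2)=2\cdot 11^6/7^2\notin\Z$. Because a CM elliptic curve cannot be $\Qbar$-isogenous to a non-CM one, this simultaneously certifies $E_1\not\sim_\Qbar E_2$ and identifies the two factors of $\cX(C)$ as $\Q$ and $\Q(\sqrt{-d})$.

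No essential obstacle is anticipated. The nine pairs $(a,b)$ in Table~\ref{table: ab} have evidently been tailored so that $j(E_1)$ hits one of the thirteen rational CM $j$-invariants attached to the nine class-number-one imaginary quadratic fields (and their rational-$j$ suborders), while $j(E_2)$ avoids $\Z$. Each row is a short, elementary computation, easily automated in SageMath using the explicit formulas for $E_1$ and $E_2$ above.
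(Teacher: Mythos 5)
Your proof is correct and follows essentially the same route as the paper: both exploit the two degree-$2$ quotients of the bielliptic curve $y^2=x^6+ax^2+b$ (the paper writes the second quotient as the quartic $y^2=x(x^3+ax+b)$, of which your $y^2=x^3+ax^2+b^2$ is just a Weierstrass model) and then check row by row that one $j$-invariant is a class-number-one CM $j$-invariant for $\Q(\sqrt{-d})$ while the other is non-integral, hence non-CM. No substantive difference.
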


\begin{proof}
The curve $C$ admits obvious degree 2 maps to the elliptic curves
$$
E\colon y^2=x^3+ax+b\,, \qquad E'\colon y^2=x(x^3+ax+b)\,.
$$
A straighforward computation shows that, for each pair $(a,b)$ of Table \ref{table: ab}, the $j$-invariant of $E$ is that of an elliptic curve with CM by $\Q(\sqrt{-d})$ (cf. \cite[page 183]{Sil94}), while the $j$-invariant of $E'$ is not integral and thus $E'$ does not have CM.
\end{proof}

To conclude the exploration of cases involving fields of class number 1, we recall that the literature contains examples of genus 2 curves defined over $\Q$ such that $\cX(C)\simeq \M_2(\sqrt{-d})$ for every $d\in \Dm_1$. For the sake of completeness, explicit examples of all of them are provided in \S\ref{section: examples}. The curves $y^2=x^6+1$ and $y^2=x^5-x$ are well known to correspond to the cases $d=3,8$. For $d=4,7$ we borrow examples from \cite{Car01}, and for $d=11,19,43,67,163$ the curves are taken from \cite{GHR19}.

\subsection{Gluing of elliptic curves}\label{section: gluingec} Although the results recalled in this section hold in greater generality, we will state them relative to a number field $F$. This will be enough for our purposes. 

The following result of Frey and Kani (see \cite[\S1]{FK91}) will be used in \S\ref{section: squares} as a fundamental tool to find an upper bound on the set $\cB$ of Theorem \ref{theorem: main}. 

\begin{proposition}[\cite{FK91}]\label{proposition: FreyKani}
Let $C$ be a genus 2 curve defined over $F$ such that $\Jac(C)$ decomposes up to isogeny. Then there exist elliptic curves $E$ and $E'$ defined over $F$, a natural number $N\geq 2$ and an isomorphism $\psi\colon E[N]\rightarrow E'[N]$ of $(\Z/N\Z)[G_F]$-modules which is also an anti-isometry of the Weil pairings such that
$$
\Jac(C)\simeq \bigslant{(E\times E')}{\Gamma}\,,
$$
where $\Gamma$ is the graph of $\psi$, that is, the group scheme with $\Gamma(\Qbar)=\{(P,\psi(P))\}_{P\in E[N]}$.  
\end{proposition}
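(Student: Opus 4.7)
The plan is to apply Poincaré's complete reducibility to $\Jac(C)$, equipped with its canonical principal polarization $\lambda$. Since $\Jac(C)$ decomposes up to isogeny over $F$, one may pick an elliptic subvariety $E\hookrightarrow \Jac(C)$ defined over $F$ and take $E'\subseteq \Jac(C)$ to be the complement of $E$ with respect to $\lambda$, that is, the reduced identity component of the kernel of the composition $\Jac(C)\xrightarrow{\lambda}\Jac(C)^\vee\twoheadrightarrow E^\vee$. Then $E'$ is an elliptic curve defined over $F$, and the sum map $\mu\colon E\times E'\to \Jac(C)$ is an $F$-isogeny.

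Next I would study the kernel $\Gamma:=\ker(\mu)$, a finite $F$-group scheme. Since $E$ and $E'$ meet only at $0$ inside $\Jac(C)$, the two projections $p_E\colon \Gamma\to E$ and $p_{E'}\colon\Gamma\to E'$ are both injective. Writing $N$ for the exponent of $\Gamma$, both $p_E(\Gamma)$ and $p_{E'}(\Gamma)$ are subgroup schemes of the $N$-torsion, and $\Gamma$ is the graph of a $G_F$-equivariant isomorphism $\psi\colon p_E(\Gamma)\xrightarrow{\sim}p_{E'}(\Gamma)$.

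The crucial input is the comparison of polarizations. The pullback $\mu^*\lambda$ is a polarization on $E\times E'$ and, using that $\lambda$ is principal together with a degree count for $\mu$, one shows that $\mu^*\lambda = N\cdot(\lambda_E\times\lambda_{E'})$, where $\lambda_E$ and $\lambda_{E'}$ denote the canonical principal polarizations. This identifies $\Gamma$ with a maximal isotropic subgroup of $(E\times E')[N]$ for the Weil pairing $e_N$ associated to the product polarization. In particular $p_E(\Gamma)=E[N]$ and $p_{E'}(\Gamma)=E'[N]$, so $\psi$ is an isomorphism of the full $N$-torsion modules. Since $e_N$ on $E\times E'$ factors as the product of the Weil pairings on $E[N]$ and $E'[N]$, isotropy of the graph of $\psi$ yields $e_N^{E'}(\psi(P),\psi(Q))=e_N^E(P,Q)^{-1}$ for all $P,Q\in E[N]$, which is precisely the anti-isometry condition. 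The quotient formula $\Jac(C)\simeq (E\times E')/\Gamma$ is then just a restatement of $\mu$ being an isogeny with kernel $\Gamma$.

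The main obstacle I anticipate is the polarization bookkeeping in the third step: verifying that $\mu^*\lambda$ is a \emph{scalar multiple} of the product polarization and identifying that scalar with the exponent $N$ of $\Gamma$. This requires the formalism of polarized isogenies and the symmetry of the Rosati involution attached to $\lambda$; it is the substance of \cite[\S 1]{FK91}, and my plan would be to follow that argument rather than reinvent it.
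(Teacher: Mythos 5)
The paper offers no proof of this proposition --- it is imported directly from \cite{FK91} --- so the only meaningful comparison is with the original Frey--Kani argument, and your sketch is essentially that argument: take the complement $E'$ of $E$ with respect to the canonical principal polarization, show the kernel $\Gamma$ of the sum map is the graph of an anti-isometry via the identity $\mu^*\lambda=N\cdot(\lambda_E\times\lambda_{E'})$, and read off the quotient description. The outline is sound, including the deduction of $p_E(\Gamma)=E[N]$ from maximal isotropy and the sign computation giving the anti-isometry.

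Two points need attention. First, your justification for the injectivity of the two projections is wrong as stated: $E$ and $E'$ do \emph{not} meet only at $0$ inside $\Jac(C)$ --- their scheme-theoretic intersection is isomorphic to $\Gamma\simeq(\Z/N\Z)^2$, as your own third step shows, so the premise contradicts your conclusion. Fortunately the injectivity is immediate for a different reason: $\Gamma=\ker(\mu)=\{(x,-x):x\in E\cap E'\}$, so any element of $\Gamma$ whose first (or second) coordinate vanishes is zero. Second, the proposition asserts $N\geq 2$, and nothing in your argument excludes $N=1$, i.e.\ $\Gamma=0$; that case must be ruled out by observing that it would make $\Jac(C)$ isomorphic to $E\times E'$ \emph{as a principally polarized abelian variety}, which is impossible because the theta divisor of the Jacobian of a smooth genus $2$ curve is irreducible (a Jacobian is an indecomposable principally polarized abelian variety). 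With these two repairs, and granting the polarization identity $\mu^*\lambda=N\cdot(\lambda_E\times\lambda_{E'})$ that you correctly isolate as the substantive input from \cite{FK91}, the proof is complete.
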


For $N=2$, there is an explicit converse of the above result due to Howe, Lepr\'evost and Poonen (see \cite[Propositions 3 and 4]{HLP00}). It will be used in \S\ref{section: squareclassnumber2} and \S\ref{section: reapairs}, in order to realize some of the elements in $\cB$ as geometric endomorphism algebras of Jacobians over $\Q$. Although it will not be used in this article, we note that there is also an explicit converse to the above result for $N=3$ (see \cite[Alg. 5.4, Thm. 5.5]{BHLS15}).

\begin{proposition}[\cite{HLP00}]\label{proposition: gluing}
Let $E$ and $E'$ be elliptic curves defined over $F$ such that there exists an isomorphism $\psi\colon E[2]\rightarrow E'[2]$ of $(\Z/2\Z)[G_F]$-modules which is not the restriction to $E[2]$ of an isomorphism $E_\Qbar \rightarrow E'_\Qbar$. Then:
\begin{enumerate}[i)]
\item There exists a genus $2$ curve~$C$ defined over $F$ such that
\begin{equation}\label{equation: isojac}
\Jac(C)\simeq \bigslant{(E\times E')}{\Gamma}\,,
\end{equation}
where $\Gamma$ is the graph of $\psi$. 
\item Suppose that $E$ and $E'$ are given by equations $y^2=f(x)$ and $y^2=g(x)$, where $f,g\in F[x]$ are monic and cubic. Let $\alpha_1,\alpha_2,\alpha_3$ (resp. $\beta_1,\beta_2,\beta_3$) be the roots of $f$ (resp. $g$). Set $\alpha_{ij}=\alpha_i-\alpha_j$ and $\beta_{ij}=\beta_i-\beta_j$ and define
$$
\begin{array}{rr}
a_1=\alpha^2_{32}/\beta_{32}+\alpha^2_{21}/\beta_{21}+\alpha_{13}^2/\beta_{13}, & a_2=\alpha_1\beta_{32}+\alpha_2\beta_{13}+\alpha_3\beta_{21}, \\
b_1=\beta^2_{32}/\alpha_{32}+\beta^2_{21}/\alpha_{21}+\beta_{13}^2/\alpha_{13}, & b_2=\beta_1\alpha_{32}+\beta_2\alpha_{13}+\beta_3\alpha_{21}. 
\end{array}
$$
Let $A=\Delta_ga_1/a_2$ and $B=\Delta_f b_1/b_2$, where $\Delta_f$ and $\Delta_g$ are the discriminants of $f$ and $g$, respectively. Then a model for $C$ satisfying \eqref{equation: isojac} is given by

\begin{align*}
y^2=-(A\alpha_{21}\alpha_{13}x^2+B\beta_{21}\beta_{13})(A\alpha_{32}\alpha_{21}x^2+B\beta_{32}\beta_{21})(A\alpha_{13}\alpha_{32}x^2+B\beta_{13}\beta_{32})\,.
\end{align*}

\end{enumerate}   
\end{proposition}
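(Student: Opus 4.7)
The plan is to establish existence (i) via Weil's classification of principally polarized abelian surfaces, and then derive the explicit model (ii) by a direct computation on the two Weierstrass covers of the resulting genus $2$ curve $C$.

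For (i): the graph $\Gamma \subset (E\times E')[2]$ is an $F$-rational subgroup of order $4$. Because $N=2$, the Weil pairing on $2$-torsion takes values in $\mu_2 = \{\pm 1\}$, so every $\Z/2\Z$-linear isomorphism is automatically an anti-isometry of Weil pairings; hence $\Gamma$ is isotropic for the Weil pairing on $(E\times E')[2]$ coming from the product principal polarization, and that product polarization descends to a principal polarization on $B := (E\times E')/\Gamma$. By Weil's theorem, the principally polarized abelian surface $B/F$ is isomorphic, as a polarized variety, either to a Jacobian of a smooth genus $2$ curve over $F$ or to a product of two elliptic curves with the product polarization. In the second case, lifting the polarized decomposition back through the isogeny $E \times E' \to B$ produces two elliptic subvarieties of $E \times E'$ whose configuration with $\Gamma$ forces $\psi$ to extend to an isomorphism $E_\Qbar \simeq E'_\Qbar$; since this is excluded by hypothesis, $B \simeq \Jac(C)$ for some genus~$2$ curve $C/F$.

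For (ii): after a suitable reordering of the roots, I may assume $\psi((\alpha_i,0)) = (\beta_i,0)$ for $i=1,2,3$. The identification $\Jac(C) \simeq (E\times E')/\Gamma$ endows $C$ with two degree~$2$ covers $\pi\colon C \to E$ and $\pi'\colon C \to E'$ whose combined branch behaviour is prescribed by $\psi$: the six Weierstrass points of $C$ split into three pairs, each pair mapping under $(\pi,\pi')$ onto one matched pair $\bigl((\alpha_i,0),(\beta_i,0)\bigr)$. This forces a hyperelliptic model of the shape
\[
y^2 \;=\; \prod_{i=1}^{3}\bigl(P_i\, x^2 + Q_i\bigr),
\]
with one quadratic factor per pair of Weierstrass points. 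The coefficients $P_i,Q_i$ are determined, up to two global scalars $A$ and $B$, as explicit products of the differences $\alpha_{jk}$ and $\beta_{jk}$ coming from the requirement that the composed covers $C \to E \to \mathbb{P}^1$ and $C \to E' \to \mathbb{P}^1$ recover exactly $f$ and $g$ as branch polynomials. The remaining ratio $A/B$ is then pinned down by the compatibility of the pullback differentials $\pi^*\omega_E$ and $\pi'^*\omega_{E'}$ as independent elements of $H^0(C,\Omega^1)$; this compatibility, expressed in the two charts of $C$, collapses to the symmetric linear combinations $a_1,a_2,b_1,b_2$ and forces $A = \Delta_g a_1/a_2$ and $B = \Delta_f b_1/b_2$. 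Carrying out this linear algebra in coordinates reproduces the stated equation.

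The main obstacle is verifying rationality over $F$ and nondegeneracy of the resulting sextic. Rationality is automatic from the $G_F$-equivariance of $\psi$: any $G_F$-permutation of $(\alpha_1,\alpha_2,\alpha_3)$ is mirrored by the same permutation of $(\beta_1,\beta_2,\beta_3)$, and the displayed equation is invariant under such simultaneous permutations of its three factors. Nondegeneracy, i.e.\ that the right-hand side has no repeated roots, follows once again from the hypothesis that $\psi$ does not extend to an isomorphism $E_\Qbar \simeq E'_\Qbar$, because by part (i) this is precisely what prevents $(E\times E')/\Gamma$ from being a polarized product of elliptic curves, and hence what guarantees that the model above genuinely defines a smooth genus~$2$ curve.
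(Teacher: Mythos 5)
The paper does not actually prove this proposition; it is quoted from Howe--Lepr\'evost--Poonen \cite{HLP00} (their Propositions 3 and 4), so the only meaningful comparison is with their argument. Your part i) follows the same route and is sound in outline: for $N=2$ any isomorphism of Galois modules $E[2]\to E'[2]$ is automatically an anti-isometry (the Weil pairing on $2$-torsion equals $-1$ exactly on pairs of distinct nonzero points, and a $\mu_2$-valued anti-isometry is the same as an isometry), so $\Gamma$ is maximal isotropic and the product polarization descends to a principal one on $(E\times E')/\Gamma$; one then invokes the Weil/Torelli dichotomy, using that genus-$2$ curves are hyperelliptic so an $F$-rational indecomposable principally polarized abelian surface that is geometrically a Jacobian is a Jacobian over $F$. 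Two caveats: over $F$ the correct trichotomy also includes the Weil restriction of an elliptic curve over a quadratic extension (geometrically but not $F$-rationally a product), and the sentence ``lifting the polarized decomposition back \dots forces $\psi$ to extend'' is not an argument but a restatement of the content of \cite{HLP00}; one has to compare the pullback of the theta divisor of the putative product with $2(\{0\}\times E'+E\times\{0\})$ to extract an isomorphism $E_{\Qbar}\to E'_{\Qbar}$ and then check that it induces $\psi$ on $E[2]$.

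The genuine gap is in part ii), whose entire content is the exact values of the constants. Your derivation bottoms out in ``carrying out this linear algebra in coordinates reproduces the stated equation,'' which is a promissory note: nothing in the sketch would detect an error in, say, the definition of $a_2$, and the claim that the differential compatibility ``collapses to'' $A=\Delta_g a_1/a_2$, $B=\Delta_f b_1/b_2$ is precisely what has to be computed. You also never show that $a_1,a_2,b_1,b_2$ are nonzero, so that $A$ and $B$ are defined and the sextic has degree $6$; this is where the hypothesis on $\psi$ enters concretely (for instance $a_2=-\det\smat{1&1&1\\ \alpha_1&\alpha_2&\alpha_3\\ \beta_1&\beta_2&\beta_3}$ vanishes exactly when some affine map $x\mapsto ux+v$ carries the $\alpha_i$ to the $\beta_i$, i.e.\ when $\psi$ is induced by a $\Qbar$-isomorphism $E\to E'$). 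Finally, your separability argument is circular: you deduce that the displayed sextic has distinct roots from the abstract existence of $C$ in part i), but that deduction is only legitimate once you already know the displayed equation is a model of $C$ --- which is exactly the assertion being proved. In \cite{HLP00} separability and the identification $\Jac(C)\simeq (E\times E')/\Gamma$ are verified directly, by exhibiting the two degree-$2$ maps from the displayed curve to $E$ and $E'$ and checking the induced identification of $2$-torsion.
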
 

\subsection{Class number 2}\label{section: squareclassnumber2} Examples of genus 2 curves $C$ defined over $\Q$ such that $\cX(C)\simeq \M_2(\sqrt{-d})$, with $d\in \Dm_2\setminus \{35,91,115,187,235,403,427\}$, can be found in the sources \cite{Car01} and \cite{GHR19} (for the reader's convenience we reproduce them in \S\ref{section: examples}).
We will find curves realizing the remaining discriminants by making use of the proposition and remark below. 

\begin{proposition}\label{proposition: classnumber2} 
Let $M$ be an imaginary quadratic field of class number $2$. Let $H_2$ denote the ray class field modulo~$2$ of $M$ and let $H^+$ denote the maximal real subfield of the Hilbert class field $H$ of $M$. Set $G=\Gal(H_2/\Q)$ and $G^+=\Gal(H_2/H^+)$, and let $Z$ denote the centralizer of $G^+$ in $G$. Suppose that
\begin{equation}\label{equation: intersection}
(G\setminus G^+) \cap Z
\end{equation} 
contains an element of order $2$. Then there exists a genus $2$ curve $C$ defined over~$\Q$ such that $\cX(C)\simeq \M_2(M)$.
\end{proposition}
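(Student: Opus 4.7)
The plan is to apply the Howe--Lepr\'evost--Poonen gluing construction of Proposition~\ref{proposition: gluing} in a Galois-descent fashion, starting from the natural pair of elliptic curves with CM by $\cO_M$. Since $h(M)=2$ forces the action of complex conjugation on the class group to be trivial, both $j$-invariants of elliptic curves with CM by $\cO_M$ are real, so they lie in $H^+$. Fix such an elliptic curve $E$ defined over $H^+$, set $E' := E^{\tau_0}$ where $\tau_0$ generates $\Gal(H^+/\Q)$, and consider the abelian surface $A := \Res_{H^+/\Q}(E)$, so that $A_{H^+}\cong E \times E'$. Note that $E \not\cong E'$ over $\Qbar$, since their $j$-invariants are the two distinct conjugates in $H^+$.

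The key observation is that the hypothesized element $g \in (G \setminus G^+) \cap Z$ of order $2$ produces exactly the gluing data needed to descend the construction to $\Q$. Since $\Q(A[2])$ is contained in $H_2$, the group $G$ acts on $A[2] = E[2] \oplus E'[2]$, and the subgroup $G^+$ preserves each summand (as $E, E'$ are defined over $H^+$). Because $g \notin G^+$, it swaps $E[2]$ and $E'[2]$; because $g \in Z$, its restriction $\psi := g|_{E[2]} \colon E[2] \to E'[2]$ is a $G^+$-equivariant $\F_2$-linear isomorphism; and because $g^2=1$, we have $g|_{E'[2]} = \psi^{-1}$. A one-line additivity calculation then shows that the graph $\Gamma := \{(P,\psi(P)) : P \in E[2]\} \subseteq A[2]$ is fixed pointwise by $g$. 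Combined with $G^+$-stability (from the $G^+$-equivariance of $\psi$), this makes $\Gamma$ a $G$-stable subgroup, hence the scheme-theoretic closure of $\Gamma$ is defined over $\Q$.

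Applying Proposition~\ref{proposition: gluing} over $H^+$ to $E, E'$ and $\psi$ (which is not the restriction of an isomorphism $E_\Qbar \cong E'_\Qbar$, since $j(E) \ne j(E')$) yields a smooth genus $2$ curve $C_{H^+}$ with $\Jac(C_{H^+}) \cong A_{H^+}/\Gamma$. Since $\Gamma$ descends, $A/\Gamma$ is a principally polarized abelian surface defined over $\Q$ that is geometrically isomorphic to this Jacobian and in particular geometrically indecomposable; a theorem of Weil then provides a smooth genus $2$ curve $C$ over $\Q$ with $\Jac(C) \cong A/\Gamma$. As $A/\Gamma$ is $\Qbar$-isogenous to $E \times E'$ and both factors have CM by $\cO_M$, we conclude $\cX(C) \simeq \M_2(M)$, as desired. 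The delicate point in the argument is verifying that the abstract hypothesis on the centralizer of $G^+$ in $G$ translates into the existence of a $G$-stable graph in $A[2]$; once that descent is in place, the rest is a direct application of the gluing technology combined with the CM structure.
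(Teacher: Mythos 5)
Your argument is correct and rests on the same key idea as the paper's proof: the order-$2$ element $\sigma\in(G\setminus G^+)\cap Z$ furnishes a $G^+$-equivariant isomorphism $E[2]\to\acc{\sigma}{E}[2]$ that is not induced by an isomorphism of curves (since $j(E)\notin\Q$), to which the Howe--Lepr\'evost--Poonen gluing applies over $H^+=\Q(j(E))$. Where you genuinely diverge is in the descent to $\Q$. The paper constructs explicit Weil descent data $\nu_\sigma$ for the polarized Jacobian over $H^+$, transfers it to the curve itself via the uniqueness part of Torelli's theorem, and invokes Weil's descent criterion \cite{Wei56}; you instead realize $E\times\acc{\sigma}{E}$ as $(\Res_{H^+/\Q}E)_{H^+}$, check that the graph $\Gamma$ of $\psi$ is stable under all of $G$ (fixed pointwise by $\sigma$, and $G^+$-stable precisely because $\sigma$ centralizes $G^+$), conclude that $A/\Gamma$ is defined over $\Q$, and then appeal to \cite{Wei57} to recognize the resulting indecomposable principally polarized surface over $\Q$ as a Jacobian. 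Both routes work; the paper's is directly effective (Remark \ref{remark: classnumber2} turns each step into a computation producing an equation for $C$), while yours replaces the cocycle verification by the cleaner statement that $\Gamma$ is a $G_\Q$-stable subgroup. Your swap/fixed-point computation for $g$ on $E[2]\oplus\acc{\sigma}{E}[2]$ is correct, and the containment $\Q(E[2],\acc{\sigma}{E}[2])\subseteq H_2$ that it silently uses holds for the same reason as in the paper, namely $H_2=H(E[2])$ because $j(E)\neq 0,1728$.

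One step you assert rather than prove: that the \emph{principal polarization} on $(E\times\acc{\sigma}{E})/\Gamma$ descends to $\Q$ together with the abelian surface. This is needed before citing \cite{Wei57}, which applies to polarized surfaces over $\Q$. It does hold: the HLP polarization $\mu$ is characterized by $\phi^*\mu=2\lambda$, where $\phi$ is the quotient isogeny and $\lambda$ the product polarization (both defined over $\Q$, the latter as the canonical polarization of $\Res_{H^+/\Q}E$); since $\phi^*$ is injective on N\'eron--Severi groups, $\mu$ is the unique such class, hence Galois-invariant, hence defined over $\Q$. Add a sentence to this effect and your proof is complete.
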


\begin{proof}
Since $M$ has class number 2, the Hilbert class field $H$ is a CM field, and thus $H^+$ is well defined. In fact, $H^+$ is the splitting field of the Hilbert class polynomial of discriminant $-d$. In other words, let $E$ be an elliptic curve with CM by the ring of integers of $M$; then $H^+$ coincides with $\Q(j(E))$. 
Since $j(E)\not = 0, 1728$, by \cite[Thm. 4.3, Thm. 5.6]{Sil94} we have that
$$
H_2=H(x(E[2]))=H(E[2])\,,
$$
and thus the action of $G_{H^+}$ on $E[2]$ factors through $G^+$. 
Let $\sigma$ be an element of order $2$ in the intersection \eqref{equation: intersection}. We will apply part $i)$ of Proposition \ref{proposition: gluing} to the pair of elliptic curves $E$ and $\acc \sigma E$ defined over the field $H^+=\Q(j(E))$.

We claim that conjugation by $\sigma$ induces an isomorphism $\psi_\sigma\colon E[2] \rightarrow \acc \sigma E[2]$ of $(\Z/2\Z)[G^+]$-Galois modules. Indeed, the $\Z/2\Z$-linearity of $\psi_\sigma$ is a direct consequence of the fact that $\psi_\sigma$ is a bijection that maps $0$ to $0$; its $G^+$-equivariance is a consequence of the fact that $\sigma$ commutes with every element in $G^+$. Since $\sigma \not \in G^+$, we have that $\sigma$ restricts to the nontrivial automorphism of $H^+$ and hence $j(E)\not =j(\acc \sigma E)$. This implies that $\psi_\sigma$ does not come from an isomorphism $E_\Qbar \rightarrow \acc\sigma E_\Qbar$. Let $\Gamma_\sigma$ be the graph of $\psi_\sigma$. By Proposition \ref{proposition: gluing}, there exists a genus $2$ curve $\tilde C$ defined over~$H^+$ and an isomorphism
$$
\varphi_\sigma\colon \Jac(\tilde C)\simeq \bigslant{(E\times \acc \sigma E)}{\Gamma_\sigma}\,,
$$
of principally polarized abelian surfaces.  We next show that $\tilde C$ can be descended to $\Q$. Let $s\colon E\times \acc\sigma E \ra \acc\sigma E \times E$ be the isomorphism that swaps $E$ and $\acc\sigma E$. As $\sigma$ has order 2, we have that $s(\acc\sigma\,\Gamma_\sigma)=\Gamma_\sigma$ and that the composition
\begin{equation}\label{equation: descentdata}
\nu_\sigma\colon \acc \sigma \Jac(\tilde C)\xrightarrow{\acc \sigma \varphi_\sigma} \bigslant{(\acc \sigma E\times E)}{\acc\sigma \,\Gamma_\sigma}{\simeq} \bigslant{(E\times \acc \sigma E)}{\Gamma_\sigma} \xrightarrow{\varphi_\sigma^{-1}} \Jac(\tilde C)\,,
\end{equation}
where the central isomorphism is the one induced by $s$, is an isomorphism of principally polarized abelian surfaces. A trivial computation shows that the isomorphism~$\nu_\sigma$ verifies that $\nu_\sigma\circ\acc \sigma \nu_\sigma=\mathrm {id}$, that is, it provides Weil descent data for $\Jac(\tilde C)$ from~$H^+$ to~$\Q$. 

It suffices to show that this descent data produces descent data for $\tilde C$. Any isomorphism 
$\xi$ between curves induces by transport of structure an isomorphism $\Jac(\xi)$ between their Jacobians.
By Torelli's theorem \cite[Thm 1 in Appendice]{Lau01}, there exists a \emph{unique} isomorphism $\lambda_\sigma:\acc \sigma {\tilde C} \rightarrow \tilde C$ such that $\Jac(\lambda_\sigma)=\nu_\sigma$. By functoriality, we have
$$
\Jac(\lambda_\sigma\circ \acc\sigma  \lambda_\sigma)=\Jac(\lambda_\sigma)\circ \Jac(\acc\sigma  \lambda_\sigma)=\nu_\sigma\circ\acc \sigma \nu_\sigma=\mathrm {id}\,,
$$
which by the unicity part of Torelli's theorem implies that $\lambda_\sigma\circ \acc\sigma  \lambda_\sigma=\mathrm{id}$.
By \cite[Thm. 1]{Wei56} there exists $C$ defined over $\Q$ such that $C_{H^+}\simeq \tilde C$, and this complets the proof of the proposition.
\end{proof}

\begin{remark}\label{remark: classnumber2}
Resume the notations of Proposition \ref{proposition: classnumber2}. We have verified with MAGMA that, for every $d\in \Dm_2\setminus \{20,52,148\}$ there is an element $\sigma$ of order~$2$ in $(G\setminus G^+)\cap Z$. To obtain a curve $C$ as in the statement of the proposition we proceed by making effective each step of its proof. Indeed, we choose a root $j$ of the Hilbert class polynomial of discriminant $-d$ and an elliptic curve $E$ such that $j(E)=j$. We choose $\sigma\in (G\setminus G^+)\cap Z$ of order~$2$, and apply part $ii)$ of Proposition \ref{proposition: gluing} to $E$ and $\acc \sigma E$ obtaining a genus 2 curve $\tilde C$ defined over $\Q(j(E))$.
We then compute the Cardona-Quer-Nart-Pujol\`as invariants $(g_1,g_2,g_3)$ of $\tilde C$ (as defined in \cite{CNP05}, \cite{CQ05}), which as predicted by Proposition \ref{proposition: classnumber2} are rational numbers. From $(g_1,g_2,g_3)$, we construct a genus $2$ curve $C$ defined over $\Q$ using MAGMA.  In \S\ref{section: examples}, we present the results of this computation for $d\in \{35,91,115,187,235,403,427\}$. The code used for this computation can be found in the file \texttt{SquareClassNumberTwo.m}.
\end{remark}

\section{Products of nonisogenous CM elliptic curves}\label{section: nonisogenousprod}

In \S\ref{section: upperbound} we show that if $C$ is a genus $2$ curve defined over $\Q$ whose Jacobian is geometrically isogenous to the product of two nonisogenous CM elliptic curves, then $\cX(C)$ is isomorphic to
\begin{equation}\label{equation: 3pairs}
\Q(\sqrt{-d})\times \Q(\sqrt{-d'})\,,\quad \text{where }
(d,d')\in \{(4,3)\,,(7,4)\,,(8,4)\}\,.
\end{equation}
In \S\ref{section: reapairs} we construct genus 2 curves over $\Q$ realizing each of the above three pairs.

\subsection{The upper bound}\label{section: upperbound} In this section, $M$ and $M'$ denote nonisomorphic imaginary quadratic fields of class number $1$ and discriminants $-d$ and $-d'$, respectively. Write $\overline \cdot $ to denote complex conjugation. 

\begin{lemma}\label{lemma: appm1}
Assume that $d\not =3,4$, and let $E/\Q$ and~$E'/\Q$ be elliptic curves with CM by $M$ and $M'$, respectively. Suppose that $p\geq 5$ is a prime of good reduction for $E$ and $E'$, split in $M$ and inert in $M'$. Then
$$
a_p(E)=\pm \Tr_{M/\Q}(\pi)\quad \text{and}\quad a_p(E')=0\,,
$$
where $\pi$ is any element in $\cO_M$ such that $p=\pi\overline \pi$.
\end{lemma}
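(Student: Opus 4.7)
My plan is to establish the two claims separately using classical CM theory. For the formula $a_p(E) = \pm \Tr_{M/\Q}(\pi)$, the key input will be that the $\ell$-adic characteristic polynomial of Frobenius at $p$ has roots lying in $\cO_M$, and that for $d \neq 3, 4$ the only elements of $\cO_M$ of norm $p$ are $\pm\pi$ and $\pm\bar\pi$. For $a_p(E') = 0$, I will show that the hypothesis that $p$ is inert in $M'$ forces supersingular reduction, and then conclude via the Hasse bound.

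First, for the formula concerning $a_p(E)$: since $E$ has good reduction at $p$ and CM by $\cO_M$, the full ring $\cO_M$ acts on the reduction $\tilde E/\F_p$. For any auxiliary prime $\ell \neq p$, the characteristic polynomial of $\Frob_p$ on $V_\ell(E)$ is $T^2 - a_p(E)\,T + p$, and $\Frob_p$ commutes with the induced action of $\cO_M$ on $V_\ell(E)$. Hence $\Frob_p$ corresponds to an element of $\cO_M$, so its eigenvalues $\alpha, \beta$ lie in $M$, are algebraic integers, and satisfy $\alpha\beta = p = \pi\bar\pi$. Since $\cO_M$ is a PID with unit group $\{\pm 1\}$ (using $d \neq 3, 4$), the only elements of $\cO_M$ of norm $p$ are $\pm\pi$ and $\pm\bar\pi$. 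The constraint $\alpha\beta = p$ then forces $\{\alpha, \beta\}$ to be either $\{\pi, \bar\pi\}$ or $\{-\pi, -\bar\pi\}$, yielding $a_p(E) = \pm\Tr_{M/\Q}(\pi)$.

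Next, for the vanishing $a_p(E') = 0$: I will show that $\tilde E'/\F_p$ is supersingular. Since the reduction map is injective on endomorphisms, $\cO_{M'}$ embeds into $\End(\tilde E'_{\overline{\F_p}})$. If $\tilde E'$ were ordinary, this endomorphism ring would be an order in some imaginary quadratic field $K$, and the embedding of the maximal order $\cO_{M'}$ would force $K = M'$. But then the Frobenius $\pi_p$ would lie in $\cO_{M'}$ and give a factorization $p = \pi_p\bar\pi_p$ there, contradicting that $p$ is inert in $M'$. Hence $\tilde E'$ is supersingular, so $p \mid a_p(E')$. Combined with the Hasse bound $|a_p(E')| \leq 2\sqrt{p} < p$ (valid since $p \geq 5$), this forces $a_p(E') = 0$.

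The only delicate point is the sign in the first claim, and it is exactly there that the hypothesis $d \neq 3, 4$ enters: it eliminates extra roots of unity in $\cO_M^\times$ that would otherwise enlarge the set of elements of norm $p$ (introducing multiplication by fourth or sixth roots of unity) and produce a more complicated relation between $a_p(E)$ and $\Tr_{M/\Q}(\pi)$.
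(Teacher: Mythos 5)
Your proof is correct and follows essentially the same route as the paper: the paper simply cites the classical facts you prove in detail, namely that inertness of $p$ in $M'$ forces supersingular reduction (hence $a_p(E')=0$ for $p\geq 5$) and that $a_p(E)=\Tr_{M/\Q}(\pi')$ for some $\pi'\in\cO_M$ with $p=\pi'\overline{\pi'}$ (via \cite[Exercise 2.30]{Sil94}), concluding with the same observation that $\cO_M^\times=\{\pm 1\}$ pins down $\pi'$ up to sign. The only cosmetic difference is that you assume CM by the maximal order when arguing that $\Frob_p$ lies in $\cO_M$, whereas the lemma allows CM by any order in $M$; this is harmless since the Frobenius eigenvalues are algebraic integers of $M$ in any case.
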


\begin{proof}
Since $p$ is supersingular for $E'$ and $p\geq 5$, we have that $a_p(E')=0$. By \cite[Exercise 2.30]{Sil94}, we have that there exists $\pi'\in \cO_M$ such that $a_p(E)=\Tr_{M/\Q}(\pi')$ and $p=\pi'\overline \pi'$. Since $\cO_M^\times=\{\pm 1\}$ by assumption, the element $\pi'$ is uniquely determined up to sign.
\end{proof}

\begin{proposition}\label{proposition: just one prime}
Assume that $d\not =3,4$, and let $N\geq 2$ and $p_0\geq 5$ be distinct rational primes. Suppose that:
\begin{enumerate}[i)]
\item $p_0$ is inert in $M'/\Q$.
\item $p_0$ is split in $M/\Q$ and $N$ does not divide $\Tr_{M/\Q}(\pi_0)$, where $\pi_0 \in \cO_M$ is such that $p_0=\pi_0 \overline \pi_0$.
\end{enumerate} 
Then for every pair of elliptic curves $E$ and $E'$ defined over $\Q$ with CM by $M$ and~$M'$ respectively, we have that $E[N]$ and $E'[N]$ are not isomorphic as $(\Z/N\Z)[G_\Q]$-modules.  
\end{proposition}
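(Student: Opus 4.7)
The plan is to derive a contradiction via a Chebotarev-style argument. Suppose for contradiction that $E[N]\simeq E'[N]$ as $(\Z/N\Z)[G_\Q]$-modules, so that $\overline\varrho_{E,N}$ and $\overline\varrho_{E',N}$ coincide. For any rational prime $p\neq N$ of good reduction for both $E$ and~$E'$, the N\'eron--Ogg--Shafarevich criterion implies that this common representation is unramified at~$p$, so that comparing traces of $\Frob_p$ yields
$$
a_p(E)\equiv a_p(E')\pmod N\,.
$$
I will exhibit such a prime $p$ whose splitting behavior in~$M$ and~$M'$ and whose associated element $\pi_p$ modulo~$N$ mimic those of $p_0$; Lemma~\ref{lemma: appm1} will then turn the displayed congruence into $N\mid\Tr_{M/\Q}(\pi_0)$, contradicting hypothesis~(ii).

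To produce $p$, consider the ray class field $H$ of $M$ of conductor $N\cO_M$. The conductor is invariant under complex conjugation, so $H/\Q$ is Galois; and since $d\neq 3,4$ we have $\cO_M^\times=\{\pm 1\}$, so class field theory gives an isomorphism $\Gal(H/M)\simeq (\cO_M/N\cO_M)^\times/\{\pm 1\}$ under which the Artin symbol of a prime $\fq=(\pi)$ of~$M$ coprime to~$N$ corresponds to the class of $\pi$ modulo $N\cO_M$. Let $K=H\cdot M'$. Since $p_0$ is coprime to~$N$, split in~$M$, and inert in~$M'$, it is unramified in~$K$, and its Frobenius class in $\Gal(K/\Q)$ is characterized by the data: trivial on $M$, nontrivial on $M'$, and mapping the prime of $M$ above $p_0$ to $\pi_0\bmod N\cO_M$. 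Chebotarev's theorem applied to $K/\Q$ then yields infinitely many rational primes $p$ sharing the Frobenius class of $p_0$; discarding the finitely many primes of bad reduction for $E$ or $E'$ (and the prime~$N$) we select one such $p$, which is automatically split in~$M$ with $\pi_p\equiv \pm\pi_0\pmod{N\cO_M}$, inert in~$M'$, and of good reduction for both curves.

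For this prime, Lemma~\ref{lemma: appm1} gives $a_p(E)=\pm\Tr_{M/\Q}(\pi_p)$ and $a_p(E')=0$, while the congruence on $\pi_p$ forces $\Tr_{M/\Q}(\pi_p)\equiv \pm\Tr_{M/\Q}(\pi_0)\pmod N$. Combining with $a_p(E)\equiv a_p(E')\pmod N$ yields $N\mid\Tr_{M/\Q}(\pi_0)$, contradicting hypothesis~(ii). The main obstacle I anticipate lies in the precise identification, via class field theory, of the Artin symbol in $\Gal(H/M)$ with the class of $\pi$ modulo $N\cO_M$: this identification is clean precisely because $\cO_M^\times=\{\pm 1\}$ (i.e., $d\neq 3,4$), and one must also verify that $p_0$ is unramified in the compositum $K$ so that its Frobenius class is well defined---a step which combines the assumptions $p_0\nmid N$, $p_0$ split in $M$, and $p_0$ inert in $M'$, without which Chebotarev could not be applied against $p_0$ itself.
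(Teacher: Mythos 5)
Your proof is correct and follows essentially the same route as the paper's: both arguments produce infinitely many primes $p$ split in $M$, inert in $M'$, and with $\Tr_{M/\Q}(\pi_p)\equiv\pm\Tr_{M/\Q}(\pi_0)\pmod N$, and then combine Lemma \ref{lemma: appm1} with the trace congruence forced by $E[N]\simeq E'[N]$ to contradict $N\nmid\Tr_{M/\Q}(\pi_0)$. The only cosmetic difference is that the paper packages the Chebotarev input as Dirichlet's theorem over $\cO_M$ for the modulus $\mathrm{lcm}(N,d')$, whereas you apply Chebotarev to the compositum of the ray class field of conductor $N\cO_M$ with $M'$ --- the same statement in different clothing.
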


\begin{proof}
Let $a$ denote $\Tr_{M/\Q}(\pi_0)$ and let $P_{a,N}$ be the set of primes $p$ such that:
\begin{enumerate}[i)]
\item $p$ is inert in $M'/\Q$.
\item $p$ is split in $M/\Q$ and 
$$
\Tr_{M/\Q}(\pi)\equiv a \pmod N\,,
$$
where $\pi\in \cO_M$ is such that $p=\pi \overline \pi$.
\end{enumerate} 
We claim that $P_{a,N}$ is infinite. It suffices to show that there exist infinitely many primes $\pi\in \cO_M$ of residue degree $1$ such that 
\begin{equation}\label{equation: congruence}
\pi \equiv \pi_0 \pmod {\mathrm{lcm}(N,d')}\,.
\end{equation}
Indeed, for such a $\pi$, we claim that $p=\Nm_{M/\Q}(\pi)$ belongs to $P_{a,N}$. On the one hand, we have
$$
p=\Nm_{M/\Q}(\pi)\equiv \Nm_{M/\Q}(\pi_0)= p_0 \pmod {d'},
$$
and thus $p$ is inert in $M'/\Q$. On the other hand, we have
$$
\Tr_{M/\Q}(\pi)\equiv \Tr_{M/\Q}(\pi_0)=a \pmod N\,.
$$ 
Note that $\pi_0$ is coprime to $d'$, since $p_0$ is inert in $M'/\Q$, and it is coprime to $N$ by hypothesis. Then, the infiniteness of the set of primes $\pi$ satisfying \eqref{equation: congruence} follows from the analogue of Dirichlet's theorem over the ring of integers $\cO_M$.

If $E[N]$ and $E'[N]$ were isomorphic as $(\Z/N\Z)[G_\Q]$-modules, then we would have that
$$
a_p(E)\equiv a_p(E') \pmod N\,,
$$
for all but for a finite number of rational primes $p$.
But together with Lemma \ref{lemma: appm1} this implies that
$$
0=a_p(E')\equiv a_p(E)=\pm a \pmod N\,,
$$
for all but for a finite number of primes $p$ in $P_{a,N}$. This contradicts the fact that~$N$ does not divide $a$.
\end{proof}

\begin{corollary}\label{corollary: two primes}
Assume that $d\not =3,4$. Suppose that there exists a triple of natural numbers $(p_1,p_2,a)$, with $p_1,p_2\geq 5$ different primes, such that:
\begin{enumerate}[i)]
\item $p_i$ is split in $M/\Q$, inert in $M'/\Q$, and $\Tr_{M/\Q}(\pi_i)=a$, where $\pi_i\in \cO_M$ is such that $p_i=\pi_i\overline \pi_i$ for $i=1,2$. 
\item For every pair of elliptic curves $E$ and $E'$ defined over $\Q$ with CM by~$M$ and~$M'$, and for every prime divisor $q$ of $a$, the $(\Z/q\Z)[G_\Q]$-modules $E[q]$ and $E'[q]$ are not isomorphic.
\end{enumerate}
Then there is no genus $2$ curve $C$ defined over $\Q$ such that $\cX(C)\simeq M\times M'$.
\end{corollary}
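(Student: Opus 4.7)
My plan is to argue by contradiction. Suppose that $C$ is a genus~$2$ curve over $\Q$ with $\cX(C)\simeq M\times M'$. Since $M \times M'$ is a product of distinct number fields and not a matrix algebra, $\Jac(C)$ decomposes up to isogeny, so Proposition~\ref{proposition: FreyKani} supplies elliptic curves $E,E'$ defined over $\Q$, an integer $N\geq 2$, and a $G_\Q$-equivariant isomorphism $\psi\colon E[N]\to E'[N]$ with $\Jac(C)\simeq \bigslant{(E\times E')}{\Gamma}$, where $\Gamma$ is the graph of $\psi$. Because $M\not\simeq M'$, the elliptic curves $E$ and $E'$ are non-isogenous over $\Qbar$, and after relabeling, $E$ has CM by $M$ and $E'$ has CM by $M'$.

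The main step is to pass from the (possibly composite) level $N$ to a prime level at which Proposition~\ref{proposition: just one prime} is available. I would fix any prime divisor $q$ of $N$: restriction of $\psi$ to the $G_\Q$-stable subgroup $E[q]\subset E[N]$ yields an isomorphism $E[q]\simeq E'[q]$ of $(\Z/q\Z)[G_\Q]$-modules. Since $p_1\neq p_2$, at least one of them --- call it $p$ --- is distinct from $q$. Taking $p$ as the auxiliary prime in Proposition~\ref{proposition: just one prime} (with the role of $N$ there played by the prime $q$), the hypotheses hold by assumption~(i), so its contrapositive forces $q\mid \Tr_{M/\Q}(\pi_p)=a$. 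But then $q$ is a prime divisor of $a$ with $E[q]\simeq E'[q]$ as $(\Z/q\Z)[G_\Q]$-modules, contradicting hypothesis~(ii).

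The only delicate point --- and the reason the corollary is stated with \emph{two} primes rather than one --- is the coprimality condition $p\neq q$: Proposition~\ref{proposition: just one prime} requires the auxiliary split-inert prime to differ from the torsion level, so a single prime $p_1$ would fail precisely when $q=p_1$. With two primes in hand one always finds a suitable auxiliary prime regardless of which prime divisor $q$ of $N$ appears, and the rest of the argument is essentially a Chebotarev-style trace comparison modulo~$q$.
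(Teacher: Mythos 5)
Your argument is correct and is essentially the paper's own proof: reduce to a prime level $q\mid N$, apply Proposition~\ref{proposition: just one prime} with whichever of $p_1,p_2$ differs from $q$ to force $q\mid a$, and conclude by hypothesis~ii). The one step you gloss over is the first: Proposition~\ref{proposition: FreyKani} needs $\Jac(C)$ to split up to isogeny \emph{over $\Q$} into elliptic curves defined over $\Q$, which does not follow merely from the geometric algebra $\cX(C)$ being a product of fields; the paper secures this by citing \cite[Proposition 4.5]{FKRS12}, the point being that the two geometric factors have distinct CM fields, so Galois cannot permute them and each factor descends up to isogeny.
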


\begin{proof}
 Assume for the sake of contradiction that there exists $C$ defined over $\Q$ such that $\cX(C)\simeq M\times M'$. By \cite[Proposition 4.5]{FKRS12}, there exist elliptic curves $E$ and $E'$ defined over $\Q$ with CM by $M$ and $M'$, respectively, such that $\Jac(C)\sim E\times E'$. By the result of Frey and Kani recalled in \S\ref{section: gluingec} (cf. Proposition~\ref{proposition: FreyKani}), there exists a prime $N\geq 2$ such that $E$ and $E'$ can be chosen so that $E[N]$ and $E'[N]$ are isomorphic as $(\Z/N\Z)[G_\Q]$-modules.
Applying Proposition \ref{proposition: just one prime} with $p_1$ and taking hypothesis $ii)$ into account, we obtain that $N=p_1$. One analogously obtains that $N=p_2$, which is a contradiction.
\end{proof}

\begin{table}[h]
\begin{center}
\begin{center}
\begin{tabular}{ |r r r r r| r r r r|  } 
 \hline
  $(d,d')$ & $p_1$ & $p_2$ & $a$  & &  $(d,d')$ & $p_1$ & $p_2$ & $a$\\
  \hline
$( 7 , 3 )$  &  29  &  113  &  2  & &   $( 43 , 19 )$  &  97  &  269  &  1 \\
\hline 
$( 8 , 3 )$  &  17  &  41  &  6  & &   $( 67 , 3 )$  &  17  &  419  &  1 \\
\hline 
$( 8 , 7 )$  &  73  &  1153  &  2  & &   $( 67 , 4 )$  &  151  &  419  &  1 \\
\hline 
$( 11 , 3 )$  &  5  &  71  &  3  & &   $( 67 , 7 )$  &  17  &  419  &  1 \\
\hline 
$( 11 , 4 )$  &  223  &  619  &  1  & &   $( 67 , 8 )$  &  151  &  821  &  1 \\
\hline 
$( 11 , 7 )$  &  223  &  619  &  1  & &   $( 67 , 11 )$  &  17  &  151  &  1 \\
\hline 
$( 11 , 8 )$  &  223  &  1213  &  1  & &   $( 67 , 19 )$  &  151  &  2027  &  1 \\
\hline 
$( 19 , 3 )$  &  5  &  233  &  1  & &   $( 67 , 43 )$  &  151  &  419  &  1 \\
\hline 
$( 19 , 4 )$  &  43  &  3463  &  1  & &   $( 163 , 3 )$  &  41  &  1019  &  1 \\
\hline 
$( 19 , 7 )$  &  5  &  1069  &  1  & &   $( 163 , 4 )$  &  367  &  1019  &  1 \\
\hline 
$( 19 , 8 )$  &  5  &  1069  &  1  & &   $( 163 , 7 )$  &  41  &  367  &  1 \\
\hline 
$( 19 , 11 )$  &  43  &  233  &  1  & &   $( 163 , 8 )$  &  367  &  1997  &  1 \\
\hline 
$( 43 , 3 )$  &  11  &  269  &  1  & &   $( 163 , 11 )$  &  41  &  1019  &  1 \\
\hline 
$( 43 , 4 )$  &  11  &  6719  &  1  & &   $( 163 , 19 )$  &  41  &  1019  &  1 \\
\hline 
$( 43 , 7 )$  &  97  &  269  &  1  & &   $( 163 , 43 )$  &  1019  &  1997  &  1 \\
\hline 
$( 43 , 8 )$  &  269  &  1301  &  1  & &   $( 163 , 67 )$  &  41  &  367  &  1 \\
\hline 
$( 43 , 11 )$  &  9041  &  10331  &  1  & &   \multicolumn{4}{c}{}    \\\cline{1-5} 

\end{tabular}
\end{center}
\vspace{6pt}
\caption{For each pair $(d,d')$, a triple $(p_1,p_2,a)$ satisfying the hypotheses of Corollary \ref{corollary: two primes}.}\label{table: triples}
\end{center}
\end{table}

In order to prove the main theorem of this section, we will apply Corollary~\ref{corollary: two primes} to each pair $(d,d')$ displayed in Table \ref{table: triples} using each of the displayed triples $(p_1,p_2,a)$. 
Before, we will describe the $3$-torsion fields of certain elliptic curves. This will be useful in order to verify that the triples $(p_1,p_2,a)$ displayed in Table \ref{table: triples} satisfy the hypotheses of Corollary \ref{corollary: two primes}. 

For an elliptic curve $E$ defined over $\Q$, let $\psi_{E,3}(x)$ denote its $3$-division polynomial, so that $\Q(x(E[3]))$ coincides with the splitting field of $\psi_{E,3}(x)$. Recall that if~$E$ is given by the Weierstrass equation
\begin{equation}\label{equation: WeierstrassEquation}
y^2=x^3+ax+b\,,\qquad \text{where }a,b\in \Q^\times,
\end{equation}
then $\psi_{E,3}(x)=3x^4+6ax^2+12bx-a^2$.
\begin{remark}\label{remark: IndependenceQuadraticTwist}
Let $E'$ be a quadratic twist of $E$. We may assume that $E'$ is given by the Weierstrass equation
$$
y^2=x^3+a/d^2x+b/d^3\,,\qquad \text{where }d\in \Q^\times.
$$ 
One trivially has $d^4\psi_{E',3}(x)=\psi_{E,3}(dx)$ and thus $\Q(x(E[3]))=\Q(x(E'[3]))$.
\end{remark}

\begin{lemma}\label{lemma: 8113torsion}
Let $E$ be an elliptic curve defined over $\Q$ with CM by $M=\Q(\sqrt{-d})$. 
\begin{enumerate}[i)]
\item If $d=8$ or $11$, then $\Q(E[3])$ is a polyquadratic extension of $M(\sqrt{-3})$.
\item If $d=3$, then either:
\begin{enumerate}[a)]
\item $\Q(E[3])$ has degree divisible by $3$; or 
\item $E$ is of the form $y^2=x^3+b$, where $b\in \Q^\times$ is such that $4b\in \Q^{\times,3}$, and $\Q(E[3])=\Q(\sqrt{-3},\sqrt{b})$. 
\end{enumerate}
\end{enumerate}
\end{lemma}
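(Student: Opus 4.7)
The plan has two clearly separated parts, structured along the split $d\in\{8,11\}$ versus $d=3$. For part (i), the key observation is that in both cases $-d\equiv 1\pmod 3$, so $3$ splits in $\cO_M$ as $\mathfrak{p}\bar{\mathfrak{p}}$ and $(\cO_M/3)^\times\simeq \F_3^\times\times \F_3^\times$ is elementary abelian of exponent $2$. Because the CM of $E$ is defined over $M$, the action of $G_M$ on $E[3]$ is $\cO_M$-linear and hence factors through the split Cartan $C=(\cO_M/3)^\times$; consequently $\Gal(M(E[3])/M)$ has exponent dividing $2$, i.e.\ $M(E[3])/M$ is polyquadratic. The Weil pairing yields $\mu_3\subset\Q(E[3])$, so $\sqrt{-3}\in\Q(E[3])$.

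The crux of part (i) is the containment $M\subset\Q(E[3])$. Since $E$ is defined over $\Q$, any lift to $G_\Q$ of the nontrivial element of $\Gal(M/\Q)$ acts on $\End(E_\Qbar)\otimes\Q=M$ as complex conjugation, interchanging the primes $\mathfrak{p}$ and $\bar{\mathfrak{p}}$ above $3$. Its image in $\Aut(E[3])$ therefore swaps the eigenspaces $E[\mathfrak{p}]$ and $E[\bar{\mathfrak{p}}]$, so lies in the normalizer of $C$ but outside $C$. A degree count (the image of $G_M$ has index $2$ in the image of $G_\Q$) forces $M\subset\Q(E[3])$. Combining, $M(\sqrt{-3})\subset \Q(E[3])=M(E[3])$; since every subgroup of an exponent-$2$ abelian group has exponent $2$, $\Q(E[3])/M(\sqrt{-3})$ is polyquadratic.

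For part (ii), with $d=3$, the approach is computational. Since $j(E)=0$ and by Remark \ref{remark: IndependenceQuadraticTwist} quadratic twists preserve $\Q(x(E[3]))$, I may fix a model $E\colon y^2=x^3+b$ with $b\in\Q^\times$. The $3$-division polynomial is $\psi_{E,3}(x)=3x(x^3+4b)$, so $\Q(x(E[3]))=\Q(\sqrt{-3},\sqrt[3]{4b})$. If $4b\notin\Q^{\times 3}$, then $[\Q(\sqrt[3]{4b}):\Q]=3$ divides $[\Q(E[3]):\Q]$, which is case (a). If $4b=c^3$ for some $c\in\Q^\times$, then $\Q(x(E[3]))=\Q(\sqrt{-3})$; the $y$-coordinate at $x=0$ is $\sqrt{b}$, and at $x_0=-c$ it equals $\sqrt{x_0^3+b}=\sqrt{-3b}=\sqrt{-3}\sqrt{b}$, giving $\Q(E[3])=\Q(\sqrt{-3},\sqrt{b})$, which is case (b).

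The main obstacle is the normalizer argument in part (i): cleanly verifying that complex conjugation lands outside the split Cartan, equivalently that $\Gal(M/\Q)$ acts faithfully on $\End(E_\Qbar)$ through its embedding into $M$. Once this standard CM-theoretic input is granted, part (i) reduces to the exponent-$2$ structure of $(\cO_M/3)^\times$ and part (ii) to the explicit factorization of $\psi_{E,3}$.
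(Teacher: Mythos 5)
Part (i) of your argument is correct, but it takes a genuinely different route from the paper. The paper simply notes that for $d=8$ (resp.\ $d=11$) all such $E$ are quadratic twists of a single curve $E_0$, computes that $\psi_{E_0,3}$ splits into two quadratics with splitting field $\Q(\sqrt{-2},\sqrt{-3})$ (resp.\ $\Q(\sqrt{-11},\sqrt{-3})$), and concludes from the twist-invariance of $\Q(x(E[3]))$ that $\Q(x(E[3]))=M(\sqrt{-3})$, whence $\Q(E[3])$ is polyquadratic over it. Your split-Cartan argument replaces this computation by structural facts: $3$ splits in $\cO_M$, $(\cO_M/3)^\times$ has exponent $2$, and a lift of complex conjugation swaps the two eigenspaces and hence lies outside the Cartan, forcing $M\subset\Q(E[3])$. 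This is sound (the needed input, that $\Gal(M/\Q)$ acts on $\End(E_{\Qbar})\otimes\Q=M$ by the nontrivial automorphism, is standard for CM curves over $\Q$), it is more conceptual, and it would generalize to any split prime $\ell$ with $(\cO_M/\ell)^\times$ of exponent $2$; the paper's computation instead pins down $\Q(x(E[3]))$ exactly.

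Part (ii), however, has a genuine gap. By the paper's stated convention, ``CM by $M=\Q(\sqrt{-3})$'' means CM by \emph{some order} of $M$, and there are three $\Qbar$-isomorphism classes of such curves defined over $\Q$: the one with $j=0$ (maximal order) and the two attached to the orders of conductor $2$ and $3$, with $j$-invariants $2^4\cdot 3^3\cdot 5^3$ and $2^{15}\cdot 3\cdot 5^3$. Your proof begins ``since $j(E)=0$'' and therefore treats only the first class. The other two must be shown to fall under alternative (a); the paper does this by checking that their $3$-division polynomials factor as a linear term times an irreducible cubic, so that $\Q(x(E[3]))$ --- which is twist-invariant --- has degree divisible by $3$. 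The omission matters downstream: in the proof of Theorem~\ref{theorem: three pairs} the lemma is applied to \emph{every} elliptic curve over $\Q$ with CM by $\Q(\sqrt{-3})$, not just those with $j=0$. Your computation in the $j=0$ case itself is correct and agrees with the paper's.
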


\begin{proof}
As for part $i)$, it suffices to show that $\Q(x(E[3]))=M(\sqrt{-3})$. If $d=8$, then~$E$ is a quadratic twist of the elliptic curve $E_0$ attached to the $j$-invariant $2^6\cdot 5^3$ on \cite[page 483]{Sil94}. By Remark \ref{remark: IndependenceQuadraticTwist}, the field $\Q(x(E[3]))$ is the splitting field of
$$
\psi_{E_0,3}=(x^2 - 48x + 864)(x^2 + 48x - 7200)\,,
$$
which is $\Q(\sqrt{-2},\sqrt{-3})$. If $d=11$, then~$E$ is a quadratic twist of the elliptic curve $E_0$ attached to the $j$-invariant $-2^{15}$ on \cite[page 483]{Sil94}. By Remark \ref{remark: IndependenceQuadraticTwist}, the field $\Q(x(E[3]))$ is the splitting field of
$$
\psi_{E_0,3}=(x^2 - 132x + 4752)(x^2 + 132x - 6336)\,,
$$
which is $\Q(\sqrt{-11},\sqrt{-3})$.

We now turn to part $ii)$. By \cite[page 483]{Sil94}, $E$ pertains to one of the $\Qbar$-isomorphism classes corresponding to the $j$-invariants $0$, $2^4\cdot 3^3\cdot 5^3$ and $2^{15}\cdot 3\cdot 5^3$. If $j(E)=2^4\cdot 3^3\cdot 5^3$, then we see as in the previous case that $\Q(x(E[3]))$ is the splitting field of
$$
(x - 3)(x^3 + 3x^2 - 21x + 25)\,,
$$
whose degree is divisible by $3$. If $j(E)=2^{15}\cdot 3\cdot 5^3$, then $\Q(x(E[3]))$ is the splitting field of
$$
(x - 108)(x^3 + 108x^2 - 66096x + 4665600)\,,
$$
whose degree is divisible by $3$. Finally, suppose that $j(E)=0$. Then $E$ is given by \eqref{equation: WeierstrassEquation} with $a=0$, and $\Q(x(E[3]))$ is the splitting of $x^3+4b$, which either has degree divisible by $3$ or $-4b \in \Q^{\times,3}$. In the latter case, let $c:=-(b/2)^{1/3}\in\Q^\times$. A straightforward calculation shows that $E[3]$ is generated by the points $P:=(2c,\sqrt{-3b})$ and $Q:=(0,\sqrt b)$. It follows that $\Q(E[3])=\Q(\sqrt{-3},\sqrt b)$.
\end{proof}

\begin{theorem}\label{theorem: three pairs}
Let $C$ be a genus 2 curve defined over $\Q$ such that $\cX(C)\simeq M\times M'$. Then 
$$
\cX(C)\simeq \Q(\sqrt {-d})\times \Q(\sqrt{-d'})\,,
$$
where $(d,d')\in \{ (4,3), (7,4), (8,4)\}$. 
\end{theorem}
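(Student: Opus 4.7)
The plan is to apply Corollary~\ref{corollary: two primes} with the triples recorded in Table~\ref{table: triples} to every pair $(d,d')\in \Dm_1\times \Dm_1$ with $d\ne d'$ and $(d,d')\notin\{(4,3),(7,4),(8,4)\}$. The corollary requires $d\ne 3,4$, but we may reorder since the only pair with $\{d,d'\}\subseteq \{3,4\}$ is the allowed $(4,3)$. Hypothesis~(i) (that $p_i$ splits in $M=\Q(\sqrt{-d})$, is inert in $M'=\Q(\sqrt{-d'})$, and that $\Tr_{M/\Q}(\pi_i)=a$) is a finite verification carried out by factoring primes in the two fields. Hypothesis~(ii) asks that $E[q]\not\simeq E'[q]$ as $G_\Q$-modules for every prime $q\mid a$; this is vacuous whenever $a=1$, which handles all but four entries of Table~\ref{table: triples}: the pairs $(7,3)$ and $(8,7)$ with $a=2$, $(11,3)$ with $a=3$, and $(8,3)$ with $a=6$.

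For the $q=2$ comparisons I would use that $\Q(E[2])$ is the splitting field of the cubic defining $E$, which is invariant under quadratic twists (a rescaling of the variable merely scales the roots). Applied to the Weierstrass models of Proposition~\ref{proposition: QxM} this yields $\Q(E[2])=\Q(\sqrt{-7})$ for $d=7$ and $\Q(E[2])=\Q(\sqrt{2})$ for $d=8$, while for $d'=3$ the cubic $x^3+b$ has discriminant $-27b^2$ and so $\Q(E'[2])\supseteq \Q(\sqrt{-3})$. Since the fields $\Q(\sqrt{-7})$, $\Q(\sqrt{2})$, and $\Q(\sqrt{-3})$ are pairwise distinct, $E[2]\not\simeq E'[2]$ in each of the three $q=2$ comparisons.

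For the $q=3$ comparisons, arising in $(11,3)$ and $(8,3)$, I would invoke Lemma~\ref{lemma: 8113torsion}. For $d\in\{8,11\}$ the group $\Gal(\Q(E[3])/\Q)$ is abelian of exponent~$2$, since $\Q(E[3])$ is polyquadratic over $\Q(\sqrt{-d},\sqrt{-3})$. For $d'=3$, the field $\Q(E'[3])$ either has degree divisible by~$3$ (incompatible with the $2$-power order above) or is contained in a field of degree at most~$4$. An isomorphism $E[3]\simeq E'[3]$ would force $\Q(E[3])=\Q(E'[3])$, and because $\Q(E[3])\supseteq \Q(\sqrt{-d},\sqrt{-3})$ has degree at least~$4$, equality would further require $\Q(E[3])=\Q(\sqrt{-d},\sqrt{-3})=:K$. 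The crux of the argument is to rule out this last equality for every quadratic twist of the canonical CM curve with $d\in\{8,11\}$. For this I would choose a $3$-torsion $x$-coordinate in the canonical model ($-2+\sqrt{-2}$ for $d=8$, $11+\sqrt{-11}$ for $d=11$), compute the corresponding $y^2\in K$ to get $8-20\sqrt{-2}$ (resp.\ $-242+88\sqrt{-11}$), and observe that under a twist $E\mapsto E^{(t)}$ this value scales by $t^3$. Applying the partial norm $N_{K/\Q(\sqrt{-3})}$ eliminates $t$ from the square class and reduces the question of whether $t^3\cdot y^2\in K^{\times 2}$ to whether $864$ (resp.\ $143748$) is a square in $\Q(\sqrt{-3})$. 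But $864=(12\sqrt{6})^2$ and $143748=(66\sqrt{33})^2$, and neither $\sqrt{6}$ nor $\sqrt{33}$ lies in $\Q(\sqrt{-3})$. Hence $y\notin K$ for every twist, $\Q(E[3])\supsetneq K$, and the $q=3$ step is complete.

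The principal difficulty is this final twist-uniform verification: Lemma~\ref{lemma: 8113torsion}(i) leaves open the possibility that the polyquadratic extension above $K$ is trivial, and one must exclude this simultaneously for all twists. The partial-norm reduction is precisely the device that accomplishes this, since the norm kills the twist parameter and leaves behind a single square-class obstruction in $\Q(\sqrt{-3})$.
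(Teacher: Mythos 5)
Your overall strategy coincides with the paper's: apply Corollary~\ref{corollary: two primes} with the triples of Table~\ref{table: triples}, observe that $a=1$ disposes of all but $(7,3)$, $(8,7)$, $(11,3)$, $(8,3)$, and then rule out the torsion-module isomorphisms $E[q]\simeq E'[q]$ for $q\mid a$ by comparing torsion fields. Your treatment of the $q=3$ cases is a genuine (and valid) variant: where the paper, for $(8,3)$, concedes that $\Q(E[3])$ and $\Q(E'[3])$ might both equal $\Q(\sqrt{-2},\sqrt{-3})$ and instead derives a contradiction from Frobenius traces mod~$3$ on an explicit basis of $E'[3]$, you show directly that $\Q(E[3])$ strictly contains $\Q(\sqrt{-d},\sqrt{-3})$ for every quadratic twist, via the partial norm $N_{K/\Q(\sqrt{-3})}$ applied to $t^3y_0^2$; the resulting square classes $6$ and $33$ are twist-independent obstructions, and this simultaneously supplies the detail that the paper's one-line dismissal of $(11,3)$ leaves implicit. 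That part of your argument is sound.

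There is, however, a genuine gap in your $q=2$ step. An elliptic curve over $\Q$ ``with CM by $M$'' in the sense required by Corollary~\ref{corollary: two primes} may have CM by a \emph{non-maximal} order of $M$, so it need not be a quadratic twist of the single model you take from Proposition~\ref{proposition: QxM}. Concretely, for $M'=\Q(\sqrt{-3})$ there are three $\Qbar$-isomorphism classes (of $j$-invariants $0$, $2^4\cdot3^3\cdot5^3$, and $2^{15}\cdot3\cdot5^3$), and your claim that $\Q(E'[2])\supseteq\Q(\sqrt{-3})$ is \emph{false} for the class $j=2^4\cdot3^3\cdot5^3$ (e.g.\ $y^2=x^3-15x+22$, whose $2$-torsion field is $\Q(\sqrt{3})$); likewise, for $M=\Q(\sqrt{-7})$ the class $j=3^3\cdot5^5\cdot17^3$ gives $\Q(E[2])=\Q(\sqrt{7})$, not $\Q(\sqrt{-7})$. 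The final conclusion survives, because the $2$-torsion fields arising from the omitted classes ($\Q(\sqrt{3})$, $\Q(\sqrt{7})$, and fields of degree divisible by $3$) are still distinct from those on the other side of each comparison, but your argument as written does not establish this: you must enumerate all $\Qbar$-isomorphism classes of CM curves over $\Q$ for each field (equivalently, all orders of class number one in $M$ and $M'$), as the paper does, and compare the full lists of possible $2$-torsion fields.
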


\begin{proof}
It is a straightforward computation to verify that all triples displayed on Table \ref{table: triples} satisfy hypothesis $i)$ of Corollary \ref{corollary: two primes}. The reader can find a SageMath code that verifies this at the file \texttt{find\_primes\_dirichlet.sage}. When $a=1$, hypothesis~$ii)$ is vacuous, from which we immediately deduce that
$$
(d,d')\in \{(4,3), (7,3), (7,4), (8,3), (8,4), (8,7), (11,3)\}\,.
$$
By Corollary \ref{corollary: two primes}, to rule out $(d,d')=(7,3)$, it will suffice to show that there do not exist elliptic curves $E$ and $E'$ defined over $\Q$ such that $E$ has CM by $M=\Q(\sqrt{-7})$, $E'$ has CM by $M'=\Q(\sqrt{-3})$, and $E[2]\simeq E'[2]$ as $(\Z/2\Z)[G_\Q]$-modules. It will suffice to show that for elliptic curves with these prescribed CM fields, the 2-torsion fields $\Q(E[2])$ and $\Q(E'[2])$ never coincide. On the one hand, by \cite[page 483]{Sil94},~$E$ pertains to one of the $\Qbar$-isomorphism classes corresponding to the $j$-invariants $-3^3\cdot 5^3$ and $3^3\cdot 5^5\cdot 17^3$. As $\Qbar$-isomorphic elliptic curves with these $j$-invariants are quadratic twists of each other, the field $\Q(E[2])$ is invariant in each of these two $\Qbar$-isomorphism classes (this can be seen by comparing Weierstrass equations). This yields two possibilities for $\Q(E[2])$: $\Q(\sqrt{-7})$ or $\Q(\sqrt{7})$. On the other hand, as recalled in the proof of Lemma \ref{lemma: 8113torsion}, $E'$ pertains to one of the $\Qbar$-isomorphism classes corresponding to the $j$-invariants $0$, $2^4\cdot 3^3\cdot 5^3$ and $2^{15}\cdot 3\cdot 5^3$. Arguing as before we see that if $j(E')=2^4\cdot 3^3\cdot 5^3$, then $\Q(E'[2])=\Q(\sqrt 3)$ and that if $j(E')=2^{15}\cdot 3\cdot 5^3$, then $\Q(E'[2])$ is the splitting field of $x^3 - 30x + 253/4$, which has degree divisible by~$3$. If $j(E')=0$, then $E'$ is given by \eqref{equation: WeierstrassEquation} with $a=0$, and $\Q(E'[2])$ is the splitting of $x^3+b$, which is either $\Q(\sqrt{-3})$ or has degree divisible by $3$. In any case, we find that $\Q(E[2])$ and $\Q(E'[2])$ do not coincide.

By Corollary \ref{corollary: two primes}, to rule out $(d,d')=(8,7)$, it will suffice to show that there do not exist elliptic curves $E$ and $E'$ defined over $\Q$ such that $E$ has CM by $M=\Q(\sqrt{-8})$, $E'$ has CM by $M'=\Q(\sqrt{-7})$, and $E[2]\simeq E'[2]$ as $(\Z/2\Z)[G_\Q]$-modules. As recalled in the proof of Lemma \ref{lemma: 8113torsion}, $E$ pertains to the $\Qbar$-isomorphism class of $j$-invariant $2^6\cdot 5^3$, which as previously argued implies that $\Q(E[2])=\Q(\sqrt{2})$. On the other hand, as we saw in the previous paragraph, we have that $\Q(E'[2])=\Q(\sqrt 7)$ or $\Q(\sqrt{-7})$. In particular, $\Q(E[2])$ and $\Q(E'[2])$ do not coincide.

By Corollary \ref{corollary: two primes}, to rule out $(d,d')=(11,3)$, it will suffice to show that there do not exist elliptic curves $E$ and $E'$ defined over $\Q$ such that $E$ has CM by $\Q(\sqrt{-11})$,~$E'$ has CM by $\Q(\sqrt{-3})$, and $E[3]\simeq E'[3]$ as $(\Z/3\Z)[G_\Q]$-modules. By Lemma \ref{lemma: 8113torsion}, $\Q(E[3])$ and $\Q(E'[3])$ can not coincide.

By Corollary \ref{corollary: two primes}, to rule out $(d,d')=(8,3)$, it will suffice to show that there do not exist elliptic curves $E$ and $E'$ defined over $\Q$ such that $E$ has CM by $\Q(\sqrt{-8})$,~$E'$ has CM by $\Q(\sqrt{-3})$, and $E[N]\simeq E'[N]$ as $(\Z/N\Z)[G_\Q]$-modules for $N=2$ or $N=3$. 
We first show that this is impossible for $N=2$. We have already seen that $\Q(E[2])=\Q(\sqrt{2})$, and that $\Q(E'[2])$ is either $\Q(\sqrt{-3})$, $\Q(\sqrt{3})$, or has degree $3$.
Suppose that $E[3]\simeq E'[3]$ as $(\Z/3\Z)[G_\Q]$-modules. A simple argument using Lemma~\ref{lemma: 8113torsion}, shows that if $\Q(E[3])$ and $\Q(E'[3])$ coincide, then $b$ lies either in $-2\cdot \Q^ {\times,6}$ or in $2\cdot 3^ 3\cdot \Q^{\times,6}$. We may thus assume that~$E'$ is given by either 
$$
y^2=x^3-2\,, \qquad \text{or} \qquad y^2=x^3+54\,.
$$
In the first case, $P=(2,\sqrt 6)$ and $Q=(0,\sqrt {-2})$ constitute a basis of $E'[3]$. In the second case, a basis is given by $P=(-6,9\sqrt{-2})$ and $Q=(0,3\sqrt 6)$. Using this basis, it is easy to see that for any prime $p$ of good reduction for $E'$ split in $\Q(\sqrt{-3})$ and inert in $\Q(\sqrt{-2})$ we have 
$$
\Tr(\overline\varrho_{E',3}(\Frob_p))\equiv 1\pmod 3\,.
$$ 
This contradicts the fact that for any prime $p$ of good reduction for $E$ and inert in $\Q(\sqrt{-2})$ we have that $\Tr(\overline\varrho_{E',3}(\Frob_p))\equiv a_p(E)=0\pmod 3$.
\end{proof}

\begin{remark}
We note that for $(d,d')\in \{(4,3),(7,4),(8,4)\}$, there does not exist a triple $(p_1,p_2,a)$ satisfying the hypotheses of Corollary~\ref{corollary: two primes}. Obviously, this needs a justification only if $d=7$ or $8$. In that case, it is an elementary exercise to show that $\Tr_{M/\Q}(\pi)$ is even for every prime $\pi$ in $\cO_M$ of residue degree 1. But as we will see in the next section, there are choices of elliptic curves $E$ and $E'$ with CM by $M$ and~$M'$, respectively, for which $E[2]$ and $E'[2]$ are isomorphic as $\Z/2\Z[G_\Q]$-modules.  
\end{remark}

\subsection{The three realizable pairs}\label{section: reapairs}
The elliptic curves 
$$
E_1\colon y^2 = x^{3} - 1\,,  \qquad E_1' \colon y^2 = x^{3} + 3 x\,.
$$
have CM by $\Q(\sqrt{-3})$ and by $\Q(\sqrt{-4})$ respectively. They both have a rational point of order $2$ and $\Q(E_1[2])=\Q(E_1'[2])=\Q(\sqrt{-3})$. Label the $2$-torsion points of $E_1$ and $E_1'$ as
$$
E_1[2] = \{O, P_1,P_2,P_3\}\,,\qquad E_1'[2] = \{O,Q_1,Q_2,Q_3\}  
$$
in such a way that $P_1\in E_1(\Q)$ and $Q_1\in E_1'(\Q)$. Then the map $\psi\colon E_1[2]\ra E_1'[2]$ given by $\psi(P_i)=Q_i$ is an isomorphism of $G_\Q$-modules that does not arise from an isomorphism between $E_1$ and $E_1'$ (since $j(E_1)\neq  j(E_1')$). Applying the method of Proposition \ref{proposition: gluing} gives the equation
\begin{align*}
  y^2 = -4251528x^6 + 6377292x^4 - 6377292x^2 + 2125764\,.
\end{align*}
The coefficients of the right hand side are divisible by square factors, clearing them out we obtain the curve
\begin{align*}
  C_1\colon y^2 = -2x^6 + 3x^4 - 3x^2 + 1\,,
\end{align*}
whose Jacobian is isogenous over $\Q$ to $E_1\times E_1'$.

The same method can be applied to the curves
$$
 E_2\colon  y^2 = x^{3} + 7 x\,,\qquad  E_2' \colon y^2 + x y + y = x^{3} -  x^{2} - 55 x - 178\,.
$$
They have CM by $\Q(\sqrt{-4})$ and $\Q(\sqrt{-7})$, respectively, and $\Q(E_2[2])=\Q(E_2'[2])=\Q(\sqrt{-7})$. The obtained curve is
\begin{align*}
  C_2\colon y^2 = 21870000x^6 - 1002375x^4 + 2025x^2 - 30\,, 
\end{align*}
whose Jacobian is isogenous over $\Q$ to $E_2\times E_2'$.

Finally, the curves 
$$
E_3\colon y^2 = x^3 - 2x\,,\qquad 
E_3'\colon  y^2 = x^3 - x^2 - 3x - 1 
$$
have CM by $\Q(\sqrt{-4})$ and $\Q(\sqrt{-8})$, respectively, and $\Q(E_3[2])=\Q(E_3'[2])=\Q(\sqrt{2})$. The same method as above produces the curve
\begin{align*}
  C_3\colon y^2 = -46656x^6 - 1296x^4 + 108x^2 - 1,
\end{align*}
whose Jacobian is isogenous over $\Q$ to $E_3\times E_3'$.
\section{Class number 4}\label{section: squares}

Throughout this section let $M=\Q(\sqrt{-d})$, where $d \in \Dm_{2,2}$. That is, $M$ is a quadratic imaginary field with class group $\cyc 2 \times \cyc 2$. Let $\Om_M$ denote the ring of integers of $M$ and let $H$ be the Hilbert class field of $M$. The results of this section complete the proof of Theorem \ref{theorem: main}. Recall the sets $\Dm_{2,2}^J \subseteq \Dm_{2,2}^S \subseteq \Dm_{2,2}$ defined by \eqref{equation: DNakamura} and \eqref{equation: DNarbonneRitzenthaler}. 
Kani's theorem \cite[Thm. 2]{Kan11} applied to our particular setting produces the following description.

\begin{proposition}[Kani]
Suppose that $d\in \Dm_{2,2}$ and that $A$ is an abelian surface defined over~$\Q$ such that there is an elliptic curve $E_0$ defined over $\Qbar$ with CM by~$M$ and such that $A_\Qbar\sim E_0^2$.
Then there exist elliptic curves $E$ and $E'$ defined over $\Qbar$ with CM by $M$ such that $A_\Qbar\simeq E\times E'$.
\end{proposition}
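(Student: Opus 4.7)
The plan is to reduce the claim to a module-theoretic question over the endomorphism order of $E_0$ and then invoke Kani's classification \cite[Thm.~2]{Kan11} directly; the statement in the paper is essentially a restatement of his theorem in our notation.

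First, I would set up the standard dictionary between abelian surfaces in a fixed CM isogeny class and finitely generated torsion-free modules over the endomorphism order. Concretely, let $\cO=\End_\Qbar(E_0)$, which is an order in $M$ by hypothesis. Because $A_\Qbar\sim E_0^2$, the functor $B\mapsto \Hom(E_0,B)$ produces a torsion-free $\cO$-module of rank $2$ attached to $A$, and $A_\Qbar$ is recovered, up to $\Qbar$-isomorphism, from this module together with its natural polarization data.

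Second, the content of Kani's theorem, specialized to our setting, is that every such rank-$2$ torsion-free $\cO$-module splits as a direct sum of two rank-$1$ torsion-free $\cO$-modules. Each rank-$1$ summand corresponds to an elliptic curve whose endomorphism ring is an order of $M$, and hence to a curve with CM by $M$ in the terminology of the paper. Translating back through the equivalence yields the desired decomposition $A_\Qbar \simeq E\times E'$ with $E,E'$ having CM by $M$, which is exactly the conclusion.

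The main obstacle is not conceptual but bookkeeping: one must verify that Kani's splitting result applies to the specific order $\cO$ arising here. For the maximal order $\cO_M$ this is essentially the structure theorem for projective modules over a Dedekind domain. For a non-maximal order, however, it requires Kani's more delicate analysis of ambient orders and local conductors, which is the technical heart of \cite[Thm.~2]{Kan11}. A last minor verification, ensuring that every order in $M$ is actually realized by some CM elliptic curve over $\Qbar$, is guaranteed by classical CM theory and poses no real difficulty. Once all of this is in place, the proposition is immediate.
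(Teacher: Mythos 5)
The paper gives no proof of this proposition at all: it is stated as a direct specialization of Kani's Theorem~2 of \cite{Kan11}, which is exactly the reduction you make, so at the level of overall strategy your proposal matches the paper. However, the intermediate dictionary you describe would fail as written. Over $\Qbar$ the Galois action on $E_0[\ell]$ is trivial, hence contained in the scalars, so by \cite[Thm.~7.7]{JKPRST18} the functors $\mathcal Hom_{\cO}(-,E_0)$ and $\Hom(-,E_0)$ are \emph{not} inverse equivalences in this setting; in particular $A_\Qbar$ is not recovered from the rank-$2$ $\cO$-module $\Hom(E_0,A_\Qbar)$. Concretely, the essential image of $\mathcal Hom_{\cO}(-,E_0)$ consists only of products of elliptic curves with CM by the fixed order $\cO=\End(E_0)$, whereas the isogeny class of $E_0^2$ over $\Qbar$ contains surfaces built from curves with CM by other orders of $M$ --- which is precisely why the conclusion of the proposition only asserts CM by $M$ and not by $\cO$. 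Kani's actual argument works instead with the period lattice $\Lambda\subset M^2$ of $A$ and applies the Borevich--Faddeev/Bass structure theory of torsion-free modules over the multiplier order of $\Lambda$, allowing the two rank-$1$ summands to be ideals of (possibly different) larger orders. Since you ultimately defer the technical heart to \cite[Thm.~2]{Kan11}, your conclusion stands, but the $\Hom(E_0,-)$ step should be replaced by the lattice-theoretic one (or restricted to the situation, as in \S\ref{section: redstep2} of the paper, where the non-scalar condition can actually be verified over the Hilbert class field).
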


The goal of \S\ref{section: redstep1} and \ref{section: redstep2} is to refine the above result by showing that~$E$ and~$E'$ can be chosen to have CM by $\cO_M$. Once this is shown, we are in the setting of \cite{GHR19} and \cite{Nar22}, and in \S\ref{section: redstep3} we use their classification results to find the upper bound $\Dm_{2,2}^J$ in Theorem \ref{theorem: main}.
In \S\ref{section: realization}, we certify that the upper bound~$\Dm_{2,2}^J$ is sharp relying on the parametrizations of \cite{Kum15}.

\subsection{The first reduction step}\label{section: redstep1}

The fundamental point of the next result is that if $A$ is an abelian surface defined over $\Q$ such that $\cX(A)\simeq \M_2(M)$ for $d \in \Dm_{2,2}$, then all the endomorphisms of $A$ are defined over $H$. This was shown in the course of the proof of the main result of \cite{FG20}. 

\begin{proposition}\label{proposition: reduction1}
Suppose that $d\in \Dm_{2,2}$ and that $A$ is an abelian surface defined over $M$ such that $\cX(A)\simeq \M_2(M)$.
Then $d\in \Dm_{2,2}^S$ and there is an elliptic curve $E_0$ defined over $H$ with CM by~$\cO_M$ and such that $A_H\sim E^2_0$.
\end{proposition}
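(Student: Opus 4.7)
The plan combines the classification Theorem \ref{theorem: FG20} of \cite{FG20} with Poincar\'e's complete reducibility and standard facts about CM elliptic curves. The containment $d \in \Dm_{2,2}^S$ is essentially immediate: since $\cX(A) \simeq \M_2(\Q(\sqrt{-d}))$ with $d \in \Dm_{2,2}$, Theorem \ref{theorem: FG20}(ii) applied to a model over $\Q$ in the isogeny class (which will exist once the structural result below is in hand) leaves no alternative other than $d \in \Dm_{2,2}^S$.

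The key input for the isogeny decomposition is the refined result of \cite{FG20} recalled in the introduction: under the hypothesis $\cX(A) \simeq \M_2(M)$ with $d \in \Dm_{2,2}$, the Hilbert class field $H$ is the minimal field of definition of the endomorphisms of $A$. Consequently,
$$
\End_H(A_H) \otimes_{\Z} \Q \;=\; \End_\Qbar(A_\Qbar) \otimes_{\Z} \Q \;=\; \cX(A) \;\simeq\; \M_2(M).
$$
Applying Poincar\'e's complete reducibility theorem to $A_H$ yields an $H$-isogeny decomposition of $A_H$ as a product of powers of pairwise nonisogenous simple abelian varieties defined over $H$. The shape $\M_2(M)$ of the endomorphism algebra combined with $\dim A = 2$ forces this decomposition to be $A_H \sim B^2$ for a single simple $B/H$ of dimension $1$ with $\End(B) \otimes \Q \simeq M$; that is, $B$ is an elliptic curve $E_0/H$ with CM by the field $M$.

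To upgrade to CM by the \emph{maximal} order $\cO_M$, observe that if $E_0$ has CM by a non-maximal order $\cO \subsetneq \cO_M$ of conductor $f$, there is a standard $H$-rational isogeny $E_0 \to E_0/E_0[\ff]$ whose target has CM by $\cO_M$, where $\ff$ is the $\cO_M$-ideal built from $f$; the relevant kernel is $G_H$-stable because the order $\cO$ is already defined over $H$. Replacing $E_0$ by this isogenous curve preserves the isogeny $A_H \sim E_0^2$ and produces the desired $E_0/H$ with CM by $\cO_M$.

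The hypothesis $d \in \Dm_{2,2}^S$ can now be justified cleanly: with $A_H \sim E_0^2$ in hand, $A$ belongs to a $\Qbar$-isogeny class also containing (by restriction of scalars arguments along the Gross $\Q$-curve attached to $E_0$, as in the first step of the FG20 proof) an abelian surface defined over $\Q$ with geometric endomorphism algebra $\M_2(M)$; Theorem \ref{theorem: FG20} then excludes the six discriminants that define $\Dm_{2,2}^S$. I expect the only mildly delicate point to be transporting the FG20 field-of-definition statement to the slightly broader setting of $A/M$ rather than $A/\Q$; however, the FG20 argument ultimately takes place over $H$ itself, so this transport is essentially automatic, and the rest of the proof is a matter of bookkeeping.
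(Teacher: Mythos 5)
Your treatment of the maximal-order step is a correct alternative to the paper's. You replace $E_0$ by $E_0/E_0[\ff]$ with $\ff=f\cO_M$; this works because $f\cO_M\subseteq \cO_f=\End_H(E_0)$ consists of $H$-rational endomorphisms (recall $H\supseteq M$), so the kernel is $G_H$-stable and the quotient is an elliptic curve over $H$ with CM by the maximal order, still satisfying $A_H\sim E_0^2$ up to isogeny. The paper instead shows that a nonmaximal order cannot occur at all: since $E_0$ is defined over $H$, the ring class field $H_f=M(j(E_0))$ is contained in $H$, hence equals $H$, and the formula $[H_f:H]=f\prod_{p\mid f}\left(1-\left(\frac{-d}{p}\right)\frac{1}{p}\right)$ forces $f=1$ or ($f=2$ with $2$ split in $M$), the latter being excluded by inspecting the discriminants in $\Dm_{2,2}^S$. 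Either route is acceptable; yours avoids the case analysis on $2$ at the cost of an isogeny replacement.

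The genuine gap is in your derivation of $d\in\Dm_{2,2}^S$. You propose to apply Theorem \ref{theorem: FG20} to ``a model over $\Q$ in the $\Qbar$-isogeny class of $A$,'' produced ``by restriction of scalars arguments along the Gross $\Q$-curve attached to $E_0$.'' But whether $\Res_{H/\Q}E_0$ has a two-dimensional isogeny factor over $\Q$ with geometric endomorphism algebra $\M_2(M)$ is exactly the content of the Nakamura-style computation that \emph{defines} $\Dm_{2,2}^S$; for $d\in\Dm_{2,2}\setminus\Dm_{2,2}^S$ no abelian surface over $\Q$ with that algebra exists at all, so you cannot assert the existence of such a $\Q$-model as an intermediate step --- its existence is equivalent to the conclusion you are trying to reach. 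More basically, the hypothesis only gives $A$ over $M$, and the existence of such an $A$ over $M$ does not formally yield one over $\Q$, so Theorem \ref{theorem: FG20} (a statement about surfaces over $\Q$) cannot be invoked directly. The same transport problem affects your use of the field-of-definition statement (all endomorphisms defined over $H$), which you flag but do not resolve; your Poincar\'e-reducibility step depends on it. The paper's proof avoids both issues by citing the body of the proof of \cite[Thm.~1.2, p.~1416]{FG20}, where the analysis --- realizing $A$ as a factor of the restriction of scalars of a Gross $\Q$-curve and decomposing it --- is carried out in a form valid for $A$ over $M$ and delivers $A_H\sim E_0^2$ and $d\in\Dm_{2,2}^S$ simultaneously. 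To complete your argument you would need to either redo that analysis over $M$ or cite it at that level of generality.
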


\begin{proof}
The existence of an elliptic curve $E_0$ defined over $H$ with CM by $M$ such that $
A_H\sim E^2_0$ and the fact that $M\in \Dm_{2,2}^S$ are proven in the course of \cite[Proof of Thm. 1.2; page 1416]{FG20}. Suppose now that $E_0$ has CM by an order $\cO_f$ in $M$ of conductor $f$, and let us show that $f=1$. Let $H_f$ denote the ring class field of conductor $f$. Then, by the theory of complex multiplication, the field of definition of $E_0$ must contain $H_f$, and thus we deduce that $H$ and $H_f$ coincide. From the formula
$$
[H_f:H]=f \prod_{p\mid f}\left( 1-\left( \frac{-d}{p}\right) \frac{1}{p}\right)
$$
(see \cite[Cor. 7.24]{Cox89}, for example), we deduce that $H_f=H$ if and only if either $f=1$ or $f=2$ and $2$ is split in $M$. But one readily verifies that $2$ does not split in $\Q(\sqrt{-d})$ for any $d\in \Dm_{2,2}^S$, and therefore we must have $f=1$. 
\end{proof}

\subsection{The second reduction step}\label{section: redstep2}

After describing the $H$-isogeny class of an abelian surface $A$ defined over $\Q$ such that $\cX(A)\simeq \M_2(M)$ for $d\in \Dm_{2,2}^S$, we will characterize the $H$-isomorphism classes it contains.

\begin{proposition}\label{proposition: reduction2}
Suppose that $d\in \Dm_{2,2}^S$ and that $A$ is an abelian surface defined over~$H$ such that there is an elliptic curve $E_0$ defined over $H$ with CM by~$\cO_M$ and such that $A\sim E_0^2$.
Then there exist elliptic curves $E$ and $E'$ defined over $H$ with CM by $\Om_M$ such that $A\simeq E\times E'$.
\end{proposition}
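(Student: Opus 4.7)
The plan is to combine the Kani proposition of this section with the result of \cite{JKPRST18}, in three steps: first establish that all endomorphisms of $A_\Qbar$ are defined over $H$, then use Kani to obtain an $H$-isogeny decomposition whose factors have CM by $\cO_M$, and finally use \cite{JKPRST18} to promote this into an $H$-isomorphism.

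To begin, I would observe that the $H$-isogeny $A\to E_0^2$ induces an isomorphism $\End(A_H)\otimes \Q\simeq \End(E_0^2)\otimes \Q=\M_2(M)$, a $\Q$-algebra of dimension $8$. Since $E_0$ already has CM over $H$, base-changing to $\Qbar$ does not enlarge the endomorphism algebra, so $\End(A_\Qbar)\otimes \Q=\M_2(M)$ as well. The inclusion $\End(A_H)\otimes \Q\subseteq \End(A_\Qbar)\otimes \Q$ is therefore forced to be an equality, and every endomorphism of $A_\Qbar$ is defined over $H$.

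Next, Kani's proposition applied to $A_\Qbar\sim E_0^2$ produces elliptic curves $\widetilde E,\widetilde E'$ defined over $\Qbar$ with CM by $M$ and a $\Qbar$-isomorphism $A_\Qbar\simeq \widetilde E\times \widetilde E'$. The pair of orthogonal idempotents in $\End(A_\Qbar)\otimes \Q$ determined by this decomposition lies in $\End(A_H)\otimes \Q$ by the previous step, so Poincar\'e reducibility splits $A$ up to $H$-isogeny as $A\sim B\times B'$ for certain elliptic curves $B,B'$ defined over $H$, each $\Qbar$-isogenous to $\widetilde E$ and $\widetilde E'$, respectively. Since $B$ and $B'$ are defined over $H$ and have CM by orders $\cO_f$ and $\cO_{f'}$ in $M$, their $j$-invariants generate the ring class fields $H_f$ and $H_{f'}$, both of which must therefore be contained in $H$. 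Exactly as in the final paragraph of the proof of Proposition \ref{proposition: reduction1}, the formula for $[H_f:H]$ together with the non-splitting of $2$ in $M$ for $d\in \Dm_{2,2}^S$ forces $f=f'=1$, so both $B$ and $B'$ have CM by $\cO_M$.

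The main obstacle is the final step: upgrading the $H$-isogeny $A\sim B\times B'$ to an $H$-isomorphism $A\simeq E\times E'$ with $E$ and $E'$ elliptic curves defined over $H$ with CM by $\cO_M$. This is precisely the content of \cite{JKPRST18}: within the $H$-isogeny class of $B\times B'$, one can replace the factors by $H$-isogenous curves $E$ and $E'$, still defined over $H$ and still with CM by $\cO_M$, so that $A\simeq E\times E'$ over $H$. The first two steps are routine CM-theoretic bookkeeping; the arithmetic heart of the argument is this last invocation of \cite{JKPRST18}.
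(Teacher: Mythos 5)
Your first two steps are fine (indeed step 1 is nearly automatic from the hypotheses, since $A\sim E_0^2$ over $H$ and $E_0$ already has all of its CM defined over $H\supseteq M$), but the final step contains a genuine gap: you assert that ``this is precisely the content of \cite{JKPRST18},'' treating as a black box the claim that any abelian surface $H$-isogenous to a product of two elliptic curves with CM by $\Om_M$ is $H$-\emph{isomorphic} to such a product. That is not what \cite{JKPRST18} gives you unconditionally. Theorem 7.5 of \cite{JKPRST18} identifies the essential image of the functor $\mathcal Hom_{\Om_M}(-,E_0)$ as the surfaces isomorphic to a product of two elliptic curves with CM by $\Om_M$; to conclude that $A$ itself lies in that essential image one needs the pair of functors $\mathcal Hom_{\Om_M}(-,E_0)$ and $\Hom(-,E_0)$ to be inverse equivalences between torsion-free $\Om_M$-modules and the full isogeny category of $E_0^2$ over $H$. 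By \cite[Thm.~7.7]{JKPRST18} this equivalence holds if and only if, for \emph{every} prime $\ell$, the image of $\overline\varrho_{E_0,\ell}$ is not contained in the scalar matrices --- a condition that can fail in principle and must be checked.

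This verification is the actual mathematical content of the paper's proof: for $\ell>3$ one uses Stevenhagen's theorem that $\im(\overline\varrho_{E_0,\ell})$ surjects onto $(\Om_M/\ell\Om_M)^\times/\{\pm1\}$ to get a lower bound on $\#\im(\overline\varrho_{E_0,\ell})$ exceeding $\ell-1$, and for $\ell=2,3$ one computes, for each of the $72$ relevant $\Qbar$-isomorphism classes of elliptic curves with CM by $\Om_M$ with $d\in\Dm_{2,2}^S$, the degree over $H$ of the splitting field of the $\ell$-division polynomial (which is a quadratic-twist invariant) and checks it exceeds $\ell-1$. Without this, your ``replace the factors by $H$-isogenous curves'' step has no justification; note also that your intermediate decomposition $A\sim B\times B'$ over $H$ is not needed once the categorical equivalence is in place, since the paper passes directly from $A\sim E_0^2$ to the conclusion.
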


\begin{proof}
Let us also derive this result from \cite{JKPRST18}.
Let $\Ic$ denote the category of torsion free finitely generated $\Om_M$-modules, and let $\Pc$ denote the category of abelian varieties defined over $H$ which are isogenous to a power of $E_0$.  Let 
$$
\mathcal Hom_{\Om_M}(-,E_0)\colon \Ic \rightarrow \Pc\,,\qquad \Hom(-,E_0)\colon \Pc\rightarrow \Ic 
$$ 
be the functors defined in loc. cit. By \cite[Thm. 7.5]{JKPRST18}, the abelian surfaces defined over $H$ which lie in the essential image of $\mathcal Hom_{\Om_M}(-,E_0)$ are those isomorphic to the product of two elliptic curves with CM by $\Om_M$. In order to complete the proof of the theorem, it will suffice to show that $\mathcal Hom_{\Om_M}(-,E_0)$ and $\Hom(-,E_0)$ are inverse equivalences of categories. This is verified in the proposition below. 

\end{proof}

\begin{proposition}
Suppose that $d\in \Dm_{2,2}^S$ and that $E_0$ is an elliptic curve defined over $H$ with CM by $\Om_M$. Then the functors $\mathcal Hom_{\Om_M}(-,E_0)$ and $\Hom(-,E_0)$ are inverse equivalences of categories.
\end{proposition}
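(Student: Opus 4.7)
The plan is to apply the equivalence criterion of \cite{JKPRST18}, which asserts that the pair $(\mathcal Hom_{\Om_M}(-, E_0), \Hom(-, E_0))$ is an inverse equivalence between $\Ic$ and $\Pc$ provided that, for every rational prime $\ell$, the $\ell$-adic Tate module $T_\ell E_0$ is locally free of rank one over $\Om_M \otimes_{\Z} \Z_\ell$ (together with an analogous Dieudonn\'e-theoretic condition at the residue characteristic in positive characteristic, which is vacuous here since $H$ has characteristic zero).

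To verify this local freeness condition I proceed as follows. Since $\Om_M$ is the maximal order of $M$, it is a Dedekind domain, and therefore $\Om_M \otimes \Z_\ell$ is a finite product of complete discrete valuation rings. As $E_0$ is defined over $H$ with CM by $\Om_M$, the rational Tate module $V_\ell E_0 := T_\ell E_0 \otimes_{\Z_\ell} \Q_\ell$ is free of rank one over $M \otimes \Q_\ell$. The lattice $T_\ell E_0 \subset V_\ell E_0$ is then a torsion-free $\Om_M \otimes \Z_\ell$-module of the correct generic rank, and any such module over a finite product of DVRs is free of rank one. Hence the hypothesis of the criterion holds at every $\ell$, and the result follows from \cite{JKPRST18}.

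The main subtlety that the criterion of \cite{JKPRST18} is designed to overcome arises either in positive characteristic, where Dieudonn\'e theory at the residue characteristic can obstruct the equivalence, or when $\End(E_0)$ is a non-maximal order, in which case local freeness can fail at primes dividing the conductor. Neither situation occurs here, since we work over the number field $H$ and the endomorphism ring is the full ring of integers $\Om_M$, so the verification is essentially formal once the right framework is set up.
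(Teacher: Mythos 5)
There is a genuine gap: you have invoked the wrong criterion from \cite{JKPRST18}. The condition you verify---that $T_\ell E_0$ is locally free of rank one over $\Om_M\otimes\Z_\ell$---holds automatically for \emph{any} elliptic curve with CM by the maximal order over \emph{any} field of characteristic zero, exactly by the Dedekind-domain argument you give. If this were the criterion, the proposition would be true with $H$ replaced by an arbitrary number field, and it is not: for instance, over the field $H(E_0[2])$ the Galois action on $E_0[2]$ is trivial, hence scalar, every line in $E_0[2]$ is Galois-stable, and the quotients of $E_0$ by lines that are not $\Om_M$-submodules produce objects of $\Pc$ outside the essential image of $\mathcal Hom_{\Om_M}(-,E_0)$, so the functors are not inverse equivalences there. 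The criterion actually relevant here (Theorem~7.7 of \cite{JKPRST18}, for an elliptic curve over a general base field) is Galois-theoretic: the functors are inverse equivalences if and only if, for every prime $\ell$, the image of the mod-$\ell$ representation $\overline\varrho_{E_0,\ell}\colon G_H\to\Aut(E_0[\ell])$ is \emph{not} contained in the group of scalar matrices. Your argument never uses that the base field is the Hilbert class field $H$, which is precisely where all the content lies.

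Concretely, the missing work is the verification of this non-scalar condition, and it is not formal. One uses that $\overline\varrho_{E_0,\ell}$ factors through $(\Om_M/\ell\Om_M)^\times$ and that, by Stevenhagen's theorem, its image surjects onto $(\Om_M/\ell\Om_M)^\times/\{\pm1\}$; comparing $\#(\Om_M/\ell\Om_M)^\times/2$ with the order $\ell-1$ of the scalar subgroup settles all primes $\ell>3$. For $\ell=2,3$ the counting argument is inconclusive, and one must check by hand (via the splitting fields of the $\ell$-division polynomials, which depend only on the $\Qbar$-isomorphism class of $E_0$) that $[H(x(E_0[\ell])):H]>\ell-1$ for each of the finitely many $\Qbar$-isomorphism classes of curves with CM by $\Om_M$ with $d\in\Dm_{2,2}^S$. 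Your proposal contains none of this, so it does not establish the proposition.
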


\begin{proof}
Recall that $\overline \varrho_{E_0,\ell}$ denotes the representation obtained from the action of $G_H$ on $E_0[\ell]$.
By \cite[Thm. 7.7]{JKPRST18}, the functors $\mathcal Hom_{\Om_M}(-,E_0)$ and $\Hom(-,E_0)$ are inverse equivalences of categories if and only if for every prime $\ell$ the image of $\overline \varrho_{E_0,\ell}$ is not contained in the group of scalar matrices. It is well known that $\overline \varrho_{E_0,\ell}$ factors as
$$
\overline \varrho_{E_0,\ell} \colon G_H \rightarrow (\Om_M/\ell \Om_M)^{\times}\subseteq \Aut(E[\ell])\,. 
$$
In fact, by a result of Stevenhagen \cite{Ste01} (see also \cite[Thm. 1.4]{BC}), the image of $\overline \varrho_{E_0,\ell}$ in $(\Om_M/\ell \Om_M)^\times/\{\pm 1\}$ is surjective. This implies that
$$
\#\im(\overline \varrho_{E_0,\ell})\geq \frac{\#(\Om_M/\ell\Om_M)^{\times}}{2}=\begin{cases}
(\ell-1)(\ell-1)/2 & \text{ if $\ell$ is split in $M$,}\\
(\ell^2-1)/2 & \text{if $\ell$ is inert in $M$,}\\
\ell(\ell-1)/2 & \text{if $\ell$ ramifies in $M$.}\\
\end{cases}
$$
One readily verifies that if $\ell >3$ then the right-hand side of the above inequality is $>\ell-1$ in each of the three cases, and therefore for $\ell >3$ the image of $\overline \varrho_{E_0,\ell}$ cannot be contained in the group of scalar matrices. In order to verify that the action of $G_H$ on $E_0[\ell]$ for $\ell=2,3$ is not by scalars, it will suffice to verify that $[H(x(E_0[\ell]):H]>\ell-1$, as we have
$$
\#\im(\overline \varrho_{E_0,\ell})=[H(E_0[\ell]):H]\geq [H(x(E_0[\ell])):H]\,.
$$
The degree $[H(E_0[\ell]):H]$ depends on the $H$-isomorphism class of $E_0$, but the degree $[H(x(E_0[\ell])):H]$ only depends on the $\Qbar$-isomorphism class of $E_0$, as it coincides with the degree over $H$ of the splitting field of the $\ell$-division polynomial $\psi_{E_0,\ell}(x)$ of $E_0$, which is invariant under quadratic twist as we have already observed. For each $E_0$ representing one of the 72 $\Qbar$-isomorphism classes of elliptic curves with CM by $\Om_M$ for $d\in \Dm_{2,2}^S$, one readily verifies that the degree of the splitting field of $\psi_{E_0,\ell}(x)$ over $H$ is $>\ell -1$ for $\ell=2,3$. This completes the proof of the proposition.  
\end{proof}

\subsection{The third reduction step}\label{section: redstep3}
Let $d\in \Dm_{2,2}^S$ and $M=\Q(\sqrt{-d})$. Gélin, Howe, and Ritzenthaler \cite{GHR19} have determined the finite set $\mathcal P(d)$ of isomorphism classes of indecomposably principally polarized abelian surfaces $A$ defined over $\Qbar$ with field of moduli $\Q$ such that $A\simeq E^2$, where $E$ is an elliptic curve defined over $\Qbar$ with CM by~$\Om_M$. Extending their work, Narbonne \cite{Nar22} has recenlty determined the finite set $\Qm(d)\supseteq \Pm(d)$ of isomorphism classes of indecomposably principally polarized abelian surfaces $A$ defined over $\Qbar$ with field of moduli $\Q$ such that $A\simeq E\times E'$, where $E$ and $E'$ are elliptic curves defined over $\Qbar$ with CM by~$\Om_M$. The cardinalities of  $\Pm(d)$ and  $\Qm(d)$ may be found in \cite[Table~2]{GHR19} and \cite[Table 1]{Nar22}, and the corresponding polarizations are available on the authors webpages. 

Let $\Pm^*(d)$ (resp. $\Qm^*(d)$) denote the subset of $\Pm(d)$ (resp $\Qm(d)$) consisting of $\Qbar$-isomorphism classes containing an abelian surface defined over~$\Q$. Propositions \ref{proposition: reduction1} and \ref{proposition: reduction2}, together with Weil's equivalence \cite{Wei57} between indecomposably principally polarized abelian surfaces and Jacobians, imply that the algebra $\M_2(M)$ belongs to $\cB$ if and only if $\Qm^*(d)$ is nonempty.

We will show the following proposition by proving that $\Qm^*(d)=\emptyset$ for $d\not \in \Dm_{2,2}^J$.

\begin{proposition}\label{proposition: laststep}
If $d\not\in \Dm_{2,2}^J$, then there does not exist a genus $2$ curve $C$ defined over $\Q$ such that 
$$
\Jac(C)_\Qbar\simeq E\times E'\,,
$$
where $E$ and $E'$ are elliptic curves defined over $\Qbar$ with CM by $\Om_M$.
\end{proposition}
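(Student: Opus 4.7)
The plan is to argue by contradiction: suppose a genus $2$ curve $C$ defined over $\Q$ satisfies $\Jac(C)_\Qbar \simeq E \times E'$ with $E, E'$ elliptic curves over $\Qbar$ having CM by $\Om_M$. Since any two elliptic curves with CM by the same order in $M$ are $\Qbar$-isogenous, this gives $\Jac(C)_\Qbar \sim E^2$, and therefore $\cX(C) \simeq \M_2(M)$. Applying Proposition~\ref{proposition: reduction1} to $\Jac(C)$ (viewed as an abelian surface over $M$ by base change) immediately forces $d \in \Dm_{2,2}^S$, disposing of the six discriminants $d \in \Dm_{2,2} \setminus \Dm_{2,2}^S$.

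The interesting case is therefore $d \in \Dm_{2,2}^S \setminus \Dm_{2,2}^J = \{280, 483, 595, 627, 795\}$. Here Proposition~\ref{proposition: reduction1} further produces an elliptic curve $E_0$ defined over $H$ with CM by $\Om_M$ such that $\Jac(C)_H \sim E_0^2$, and Proposition~\ref{proposition: reduction2} upgrades this to an $H$-isomorphism $\Jac(C)_H \simeq E \times E'$ for elliptic curves $E, E'$ defined over $H$ with CM by $\Om_M$. I would then endow $\Jac(C)$ with its canonical principal polarization $\lambda_C$---which is indecomposable because $C$ is a smooth irreducible curve of genus~$2$---and observe that the $\Qbar$-isomorphism class of $(\Jac(C)_\Qbar, \lambda_C)$ is an indecomposably principally polarized abelian surface, $\Qbar$-isomorphic to a product of two elliptic curves with CM by $\Om_M$, and defined over $\Q$ (so in particular with field of moduli $\Q$). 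Thus this class belongs to the subset $\Qm^*(d)$ defined in \S\ref{section: redstep3}.

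The contradiction would then come from Narbonne's classification \cite{Nar22}: her tables enumerate $\Qm(d)$ for every $d \in \Dm_{2,2}^S$ and record, for each element, whether or not it descends to $\Q$. For the five discriminants $d \in \{280, 483, 595, 627, 795\}$ this analysis yields $\Qm^*(d) = \emptyset$, contradicting the conclusion of the previous paragraph. I expect this last invocation to be the main obstacle: the preceding reductions are formal consequences of Propositions~\ref{proposition: reduction1} and \ref{proposition: reduction2} together with the standard fact that Jacobians of smooth genus $2$ curves carry indecomposable principal polarizations, whereas checking $\Qm^*(d) = \emptyset$ for each of the five specific discriminants requires carefully matching our $\Qm^*(d)$ with the notation of \cite{Nar22} and reading off the outcome of the cohomological descent obstruction analyzed there.
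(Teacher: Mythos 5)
Your overall strategy coincides with the paper's: reduce to $d\in\{280,483,595,627,795\}$ via Proposition~\ref{proposition: reduction1}, use Propositions~\ref{proposition: reduction1} and~\ref{proposition: reduction2} together with the indecomposability of the canonical polarization to place the class of $(\Jac(C)_\Qbar,\lambda_C)$ in $\Qm^*(d)$, and then derive a contradiction from $\Qm^*(d)=\emptyset$. The gap is in how you justify that last equality. You assert that the tables of \cite{Nar22} ``record, for each element, whether or not it descends to $\Q$,'' so that $\Qm^*(d)=\emptyset$ can simply be read off. That is not what \cite{Nar22} provides: Narbonne's Table~1 gives the cardinality of $\Qm(d)$ (classes over $\Qbar$ with field of moduli $\Q$), and the passage from field of moduli to field of definition --- i.e.\ from $\Qm(d)$ to $\Qm^*(d)$ --- is exactly the descent obstruction that still has to be computed. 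This matters because $\Qm(280)$ is \emph{not} empty: it has four elements, so nothing in \cite{Nar22} alone rules out one of them being a Jacobian over $\Q$.

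The paper closes this gap by genuinely new work for $d=280$: it exhibits four explicit curves $C_1,\dots,C_4$ via their Igusa--Clebsch invariants (Table~\ref{table: invariants 280}), verifies in \S\ref{sec:280} that their Jacobians are geometrically isogenous to squares of CM elliptic curves so that, by Torelli and the count $\#\Qm(280)=4$, they exhaust $\Qm(280)$, and then computes that Mestre's obstruction \cite{Mes91} for each $C_i$ is nontrivial over $\Q$. A hypothetical genus $2$ curve $C$ over $\Q$ would share Igusa--Clebsch invariants with some $C_i$ yet have trivial Mestre obstruction, a contradiction. For $d=483,627$ your shortcut is fine since $\Qm(d)=\emptyset$, and for $d=595,795$ one has $\Qm(d)=\Pm(d)$ with $\Pm^*(d)=\emptyset$ established in \cite{GHR19}, not in \cite{Nar22}. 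So your reduction framework is sound, but the decisive step for $d=280$ is missing and cannot be outsourced to the cited classification.
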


\begin{proof}
By Proposition \ref{proposition: reduction1}, it suffices to assume that $d\in \{280,  483,  595, 627, 795\}$. 
Suppose that $d=280$. From \cite[Table 1]{Nar22}, we know that $\#\mathcal Q(280)=4$. Table~\ref{table: invariants 280} exhibits Igusa--Clebsch invariants of four genus 2 curves $C_1,\dots,C_4$ defined over~$\Qbar$. Let $i\in \{1,\dots, 4 \}$. The Igusa--Clebsch invariants of the~$C_i$ being rational and pairwise distinct (as points in the weighted projective space) implies that the field of moduli of the $C_i$ is~$\Q$, and that the~$C_i$ are pairwise non $\Qbar$-isomorphic. A computation from the Igusa--Clebsch invariants shows that $\Aut(C_i)\simeq \cyc 2$, and that Mestre's obstruction \cite{Mes91} for each of the $C_i$ is nontrivial over $\Q$ and trivial over $M$. Hence we may assume that the~$C_i$ are defined over $M$, which we do from now on. In \S\ref{sec:280} we will show that $\Jac(C_i)\sim E_0^ 2$, where~$E_0$ is an elliptic curve defined over $\Qbar$ with CM by $M$. By Propositions~\ref{proposition: reduction1} and~\ref{proposition: reduction2}, there exist elliptic curves $E$ and $E'$ defined over $H$ with CM by $\cO_M$ such that $\Jac(C_i)\simeq E\times E'$.   
Moreover, since the~$C_i$ are pairwise non $\Qbar$-isomorphic, Torelli's theorem implies that the $\Jac(C_i)$ are pairwise non $\Qbar$-isomorphic as principally polarized abelian surfaces. Putting the last two conclusions together, we infer that $\mathcal Q(d)$ is precisely the set of the $\Jac(C_i)_{\Qbar}$. 
We claim that $\Qm^*(280)=\emptyset$. Suppose for the sake of contradiction that there is a genus 2 curve $C$ defined over $\Q$ such that $\Jac(C)_\Qbar\simeq E\times E'$, where $E$ and $E'$ are elliptic curves defined over $\Qbar$ with CM by~$\Om_M$. Then for some of the $C_i$ we have that $\Jac(C)_{\Qbar}\simeq \Jac(C_i)_{\Qbar}$, as principally polarized abelian surfaces. Torelli's theorem implies that $C_\Qbar \simeq C_{i,\Qbar}$, and hence $C$ and $C_i$ share the same Igusa--Clebsch invariants. This is a contradiction with the fact that Mestre's obstruction over $\Q$ is trivial for $C$, but nontrivial for $C_i$.
As for the remaining discriminants, we read from \cite[Table 1]{Nar22} that 
$$
\Qm(483)=\Qm(627)=\emptyset\,,\qquad \Qm(595)=\Pm(595)\,,\qquad \Qm(795)=\Pm(795)\,.
$$
It thus remains to show that $\Pm^*(595)$ and $\Pm^*(795)$ are empty. This may be verified exactly as in the previous paragraph we verified that $\Qm^*(280)$ is empty. This is in fact accomplished in \cite{GHR19}. 
\end{proof}

\begin{remark}
In the proof of the above proposition we have addressed the cases $d\in \Dm_{2,2}^S\setminus \Dm_{2,2}^J$ using Proposition \ref{proposition: reduction1}. As a consistency check, we also confirmed the emptiness of $\Qm^*(d)$ for these values of $d$ using the strategy applied to the cases $d\in \{280,  483,  595, 627, 795\}$ in the above proposition. 
\end{remark}

\subsection{Curves realizing the discriminants with $d\in \Dm_{2,2}^J$}\label{section: realization}

There is a vast literature on the problem of parametrizing pairs $(C,E)$, where $C$ is a genus 2 curve and~$E$ is an elliptic curve with the property that there is an optimal map $\phi\colon C\ra E$ of fixed degree $n>1$. We write $\tilde{\mathcal L}_n$ for the moduli space of such pairs. This goes back to Legendre and Jacobi for $n=2$, and to Hermite, Goursat and Bolza among others for $n=3$ and $4$. This topic has been revisited more recently by Kuhn for $n=3$ (see \cite{Kuh88}), and by Shaska and his collaborators for $2\leq n\leq 5$ (see \cite{Sha04}, \cite{MSV09}, and \cite{SWWW08}). The connection of this problem to split Jacobians is that for such $C$ and $E$, the Jacobian $\Jac(C)$ is isogenous to the product of elliptic curves $E\times E'$, where $E'=\Jac(C)/\phi^*(E)$.

In this section we will use the more recent approach by Kumar \cite{Kum15}, who computes $\tilde{\mathcal L}_n$ for $2\leq n\leq 11$ by exploiting the fact that it is birrational to the Hilbert modular surface $Y_{-}(n^2)$ that parametrizes pairs $(A,\iota)$ of principally polarized abelian surfaces with an embedding $\iota\colon\Om_{n^2}\hookrightarrow \End(A)$, where $\Om_{n^2}$ denotes the quadratic order of discriminant $n^2$.

For each such $n=2,\dots,11$, \cite{Kum15} provides a birrational model for $\tilde{\mathcal L}_n$ of the form
\[
	z^2 = f(r,s),
\]
along with expressions of the Igusa--Clebsch invariants of the curve $C$ corresponding in the moduli space with a point $(C,E)$ with coordinates $(z,r,s)$. We note that these invariants do not depend on the $z$ coordinate, since the two points with the same $z$ coordinate correspond to points on the Hilbert modular surface having the same abelian surface and conjugate embeddings. In addition, \cite{Kum15} also provides the $j$-invariants associated to the two quotient curves $E$ and $E'$. For example, for $n= 3$, a birrational model for $\tilde{\mathcal L}_3$ is
\begin{align*}
  z^2 = 11664r^2-8(54s^3+27s^2-72s+23)r+ (s-1)^4(2s-1)^2.
\end{align*}
The Igusa-Clebsch invariants of a curve corresponding to a point $(C,E)$ with coordinates $r$ and $s$ are:

\begin{equation} \label{igcl}
\begin{aligned}
I_2 =&\ 8 s^{2} + 32 s - 16, \\
I_4 =&\ 4 s^{4} - 16 s^{3} - 192 r s + 24 s^{2} + 768 r - 16 s + 4 , \\
I_6 =&\  8 s^{6} + 16 s^{5} - 320 r s^{3}- 168 s^{4} + 1152 r s^{2} + 352 s^{3}\\ &- 2304 r^{2} + 6336 r s - 328 s^{2} - 2560 r + 144 s - 24, \\
I_{10} =&\  2^{14} r^3 .
\end{aligned}
\end{equation}
If we denote by $j_1$ and $j_2$ the $j$-invariants of $E$ and $E'$, then we have the following expressions for their product and sum:
\begin{equation} \label{j1j2}
\begin{aligned}
  j_1+j_2 =&\ (2 s^{9} - 17 s^{8} - 324 r s^{6} + 64 s^{7} + 1350 r s^{5}  - 140 s^{6}
  \\ &+ 17496 r^{2} s^{3} - 2097 r s^{4}+ 196 s^{5} - 23328 r^{2} s^{2}  \\  &
  + 1368 r s^{3} - 182 s^{4} - 314928 r^{3} + 9720 r^{2} s- 162 r s^{2}
   \\  &+ 112 s^{3} - 432 r^{2} - 198 r s - 44 s^{2} + 63 r + 10 s - 1)/{r^{2}},\\
j_1\cdot j_2 =&\  (s^{4} - 4 s^{3} + 432 r s + 6 s^{2} - 288 r - 4 s + 1)^3/{r^{3}},
\end{aligned}
\end{equation}
from which one can easily compute $j_1$ and $j_2$.

We claim that for every $d\in\Dm_{2,2}^J$ the Jacobian of the curve $C_d$ given in Table~\ref{table:M2 of class number 2 2} has geometric endomorphism algebra isomorphic to $\M_2(\Q(\sqrt{-d}))$. To prove this claim for example for $d=84$, one checks that the following values of $r$ and $s$:
\begin{align*}
  r &=\frac{728973}{2048} g^{2} - \frac{80180415}{8}, \\
  s &= -\frac{5}{1024} g^{2} + \frac{531}{4}, 
\end{align*}
where $g$ is a root of $x^{4} - 28160 x^{2} + 65536$, produce in \eqref{igcl} Igusa-Clebsch invariants that are equivalent (in the weighted projective space) to the Igusa-Clebsch invariants of $C_d$. This implies that there is a degree 3 optimal map from $C_d$ to an elliptic curve $ E$, and that  $\Jac(C_d)$ is geometrically isogenous to a product of elliptic curves $E\times E'$. We can compute the $j$-invariants of $E$ and $E'$ using \eqref{j1j2}, and one can check that the resulting $j_1$ and $j_2$ turn out to be roots of the Hilbert class polymomial of discriminant $-84$. That is, $E$ and $E'$ have CM by (the maximal order of) $\Q(\sqrt{-84})$. Since all curves with CM by the same imaginary quadratic field are geometrically isogenous, we obtain that $\Jac(C_d)$ is geometrically isogenous to $E^2$, as we aimed to see.

We have performed analogous computations for all the remaining curves in Table~\ref{table:M2 of class number 2 2}. Equations for $\tilde{\mathcal{L}}_n$, the Igusa--Clebsch invariants of the curves in the tautological family of genus 2 curves above $\tilde{\mathcal{L}}_n$, and the $j$-invariants of the corresponding elliptic quotients for the rest of values $2\leq n\leq 11$ are available in  \cite{Kum15} (to be more precise, for $n=10,11$ they are not in the article but they are available in the source files of the arXiv submission of the article \url{https://arxiv.org/abs/1412.2849}). We display in the Table \ref{table:M2 of class number 2 2} the value of the degree $n$ of the optimal map in each case. For lack of space we do not display the values of $r$ and $s$ found for each curve, but they have been computed using the software SageMath and MAGMA. The reader can find the code (which can be executed and computes the values of $r$ and $s$ in each case and performs the verifications) at the file \texttt{curves\_all\_endomorphisms\_genus\_two.sage}. 

For the sake of completeness, and as a double check, the code also verifies the correctness of the entries of Tables  \ref{table:products of fields} and \ref{table:M2 of class number 2}. Observe that this is not strictly necessary, as the correctness of these curves either had been already established before in the literature, or it has been proved in the previous sections of the present article.

% \begin{table}[h!]
%   \begin{center}
%     \caption{Igusa invariants $I_4$, $I_6$ and $I_{10}$ for genus 2 curves $C$ whose endomorphism algebra is $\M_2(\QQ(\sqrt{-280}))$. The invariant $I_2$ is always taken to be 8. The $n$ is the minimal degree of a cover from $C$ to an elliptic curve.}\label{table: invariants 280}
% \def\arraystretch{2}
% \begin{tabular}{llll}
%  $I_4$ & $I_6$ & $I_{10}$ & $n$ \\ \hline
%  $\frac{930077308872000}{392189741895961}$      
%  & $\frac{39766891191113246400000}{7766839758954136428541}$      
%  & $\frac{12269384610004081158912000000}{3046074880411510714060466826533022901}$         
%  & 3    \\ \hline
%  $\frac{2014876866302125056}{154113765252174121}$       
%  & $\frac{42534277501280985644218638336}{1512523584750862050519464525}$         
%  & $\frac{1187580791506718351402800188163111781727731712}{29137588084833905384686565664211273122819690625}$
%  & 6   \\ \hline
%  $\frac{1918868399919702000}{29496637472153037481}$       
%  & $\frac{-2868910682451370697198520000}{160198618224437412060058379371}$      
%  & $\frac{70785331219835183108019840011782156332000000}{4725320565306079067682171301646060745203880204451}$         
%  & 6     \\ \hline
%  $\frac{737413625842542000}{229236961684206961}$       
%  & $\frac{610455144068957687671512000}{109755670067550586025891209}$
%  & $\frac{-601665387889698876386708259413868000000}{25160056333899554523781819246319940426505849}$         
%  & 8     \\ \hline
% \end{tabular}
% \end{center}
% \end{table}

\subsection{Curves of discriminant $-280$} \label{sec:280}
Table \ref{table: invariants 280} lists the Igusa--Clebsch invariants of four curves $C_1,\dots,C_4$. To check that each $\Jac(C_i)$ is geometrically isogenous to a product $E\times E'$ of elliptic curves with CM by $\Q(\sqrt{-280})$, we proceed similarly as in \S \ref{section: realization}. That is to say, for each tuple of invariants $(I_2,I_4,I_6,I_{10})$ we find a value of $n$ and algebraic values $r,s$ such that the point $(C,E)$ of $\tilde{\mathcal{L}}_n$ with coordinates $r$ and $s$ has the property that the Igusa--Clebsch invariants of $C$ are equivalent to $(I_2,I_4,I_6,I_{10})$. Then we compute the $j$-invariants of the elliptic quotients $E,E'$ of $\Jac(C)$ and we verify that they correspond to elliptic curves with CM by $\Q(\sqrt{-280})$. In Table \ref{table: invariants 280} we list the value of $n$ found for each curve. The corresponding values of $r$ and $s$ for each curve, the verification that they have the stated CM, and the verification that the Mestre conic is obstructed over $\Q$ and unobstructed over $\Q(\sqrt{-280})$ can be found by executing the SageMath code  \texttt{curves\_280.sage}.

\begin{table}[h!]
  \begin{center}
    \caption{Igusa--Clebsch invariants $(I_2,I_4, I_6, I_{10})$ for genus 2 curves $C_i$ whose endomorphism algebra is $\M_2(\QQ(\sqrt{-280}))$. The $n$ is the minimal degree of a cover from $C$ to an elliptic curve.}\label{table: invariants 280}
\def\arraystretch{2}
\begin{tabular}{lll}
  Curve & $(I_2,I_4,I_6,I_{10})$ &  $n$ \\ \hline
  \noalign{\vskip 2mm}    
     $C_1$ &\begin{small} $\begin{aligned} &\left(2^3, \frac{2^6 3^4 5^3 23^5 223}{29^2 383^2 1783^2},  \frac{2^9 3^3 5^5 23^5 31 \cdot 4613569}{29^3 383^3 1783^3},\frac{2^{14}3^7 5^6 23^{12}}{29^5 383^5 1783^ 5}\right) \end{aligned}$\end{small}
                                 & 3 \\ \noalign{\vskip 2mm}    \hline 
\noalign{\vskip 2mm}    
$C_2$ &\begin{small} $\begin{aligned} & \left( 2^3,  \frac{2^{10}3^4 7^2 11^6 23^ 4}{29^2 13537009^2},    \frac{2^{13}3^3 7^2 11^6 19 23^4 353\cdot 1180303}{5^2 29^3 13537009^3}, \frac{2^{26} 3^7 7^6 11^{12}23^{12}}{5^5 29^5 13537009^5}\right)\end{aligned}$\end{small}
                                 & 6   \\ \noalign{\vskip 2mm}     \hline

  \noalign{\vskip 2mm}    
$C_3$ &\begin{small} $\begin{aligned} & \left( 2^3, \frac{2^4 3^4 5^3 7^2 11^4 23^4 59}{5431080691^2} , -\frac{2^6 3^3 5^4 7^2 11^4 23^4 47 4621\cdot 60923}{5431080691^3}  ,\ \frac{2^8 3^7 5^6 7^6 11^{12}23^{12}}{5431080691^5}  \right)\end{aligned}$\end{small}
                                 & 6   \\ \noalign{\vskip 2mm}     \hline

  \noalign{\vskip 2mm}    
$C_4$ &\begin{small} $\begin{aligned} & \left( 2^3, \frac{2^4 3^4 5^3 11^ 5 23^4 101}{109^2 4392541^2} ,  \frac{2^6 3^3 5^3 11^5 19 23^4 107\cdot 30845219}{109^3 4392541^3} ,-\frac{2^8 3^7 5^6 11^{12}23^{12}}{109^5 4392541^ 5}  \right)\end{aligned}$\end{small}
                                 & 8   \\ \noalign{\vskip 2mm}     \hline

\end{tabular}
\end{center}
\end{table}

\section{Explicit examples}\label{section: examples}
This section contains equations of genus 2 curves realizing each one of the possible 54 geometric endomorphism algebras of geometrically split Jacobians of curves defined over $\Q$.

\begin{table}[h!]
\begin{center}
\footnotesize
\setlength{\extrarowheight}{0.5pt}
\caption{Curves whose geometric endomorphism algebra is $\M_2(\Q)$, a product of fields, or $\M_2(\Q(\sqrt{-d}))$ with $d\in\Dm_1$.}\label{table:products of fields}
\vspace{6pt}
\begin{tabular}{lll}
  
Algebra & Curve & Source\\

  \hline\\
   $\M_2(\Q)$ & $y^2=x^6 + x^4 + x^2 + 1$ & \cite[Table 11, Row $E_1$]{FKRS12}\\
    \hline\\
 
  $\Q\times \Q$ & $y^2=x^6 + x^5 + x - 1$ & \cite[Table 11, Row $N(G_{3,3})$]{FKRS12}\\  \hline\\
  $\Q\times \Q(\sqrt{-3})$ & $y^2 = x^6 - 15x^2 + 22$ &  Proposition \ref{proposition: QxM}\\  \hline\\
  $\Q\times \Q(\sqrt{-4})$ & $y^2 = x^6 - 11x^2 + 14$ & Proposition \ref{proposition: QxM}\\  \hline\\
  $\Q\times \Q(\sqrt{-7})$ & $y^2 = x^6 - 35x^2 + 98$ & Proposition \ref{proposition: QxM}\\  \hline\\
  $\Q\times \Q(\sqrt{-8})$ & $y^2 = x^6 - 30x^2 + 56$ & Proposition \ref{proposition: QxM}\\  \hline\\
  $\Q\times \Q(\sqrt{-11})$ & $y^2 = x^6 - 264x^2 + 1694$ & Proposition \ref{proposition: QxM}\\  \hline\\
  $\Q\times \Q(\sqrt{-19})$ & $y^2 = x^6 -152 x^2 + 722$ &Proposition \ref{proposition: QxM}\\  \hline\\
  $\Q\times \Q(\sqrt{-43})$ & $y^2 = x^6 -3440 x^2 + 77658$ & Proposition \ref{proposition: QxM}\\  \hline\\
  $\Q\times \Q(\sqrt{-67})$ & $y^2  = x^6 -29480 x^2 + 1948226$ & Proposition \ref{proposition: QxM}\\  \hline\\
  $\Q\times \Q(\sqrt{-163})$ & $ y^2 = x^6 -8697680x^2 + 9873093538 $ & Proposition \ref{proposition: QxM}\\  \hline\\
  $\Q(\sqrt{-3})\times \Q(\sqrt{-4})$ & $y^2 = -2x^6 + 3x^4 - 3x^2 + 1$ &  Method of \cite[\S 3]{HLP00}\\  \hline\\
  $\Q(\sqrt{-4})\times \Q(\sqrt{-7})$ & $ y^2 = 21870000x^6 - 1002375x^4 + 2025x^2 - 30$  & Method of \cite[\S 3]{HLP00}\\  \hline\\
   $\Q(\sqrt{-4})\times \Q(\sqrt{-8})$& $y^2 = -46656x^6 - 1296x^4 + 108x^2 - 1$& Method of \cite[\S 3]{HLP00}\\   \hline\\
  $\M_2(\Q(\sqrt{-3}))$ & $y^2=x^6+1$ & \cite[Table 11, Row $C_1$]{FKRS12}\\
    \hline\\
$\M_2(\Q(\sqrt{-4}))$ & $y^2=x^5+x^3+\frac{81}{196}x$ & \cite[p. 112]{Car01}\\
    \hline\\
$\M_2(\Q(\sqrt{-7}))$ & $y^2=x^5+x^3+\frac{3969}{16900}x$ & \cite[p. 112]{Car01}\\
    \hline\\
$\M_2(\Q(\sqrt{-8}))$ & $y^2=x^5-x$ & \cite[Table 11, Row $C_2$]{FKRS12}\\
    \hline\\
$\M_2(\Q(\sqrt{-11}))$ &  $y^2= 2 x^{6} + 11 x^{3} - 22 $ &  \cite[Table 4]{GHR19}\\
    \hline\\
$\M_2(\Q(\sqrt{-19}))$ & $\begin{aligned}y^2= & x^{6} + 1026 x^{5} + 627 x^{4} + 38988 x^{3} \\&- 11913 x^{2} + 370386 x - 6859   \end{aligned}$ & \cite[Table 4]{GHR19}\\
    \hline\\
$\M_2(\Q(\sqrt{-43}))$ & $\begin{aligned}y^2=&\  x^{6} + 48762 x^{5} + 1419 x^{4} + 4193532 x^{3}\\& - 61017 x^{2} + 90160938 x - 79507  \end{aligned}$ &\cite[Table 4]{GHR19}\\
    \hline\\
$\M_2(\Q(\sqrt{-67}))$  &  $\begin{aligned}y^2=&\   x^{6} + 785106 x^{5} + 2211 x^{4} + 105204204 x^{3}\\ &- 148137 x^{2} + 3524340834 x - 300763 \end{aligned}$ &  \cite[Table 4]{GHR19}\\
    \hline\\
$\M_2(\Q(\sqrt{-163}))$ & $\begin{aligned}y^2=&\  x^{6} + 1635420402 x^{5} + 5379 x^{4} + 533147051052 x^{3}\\ & - 876777 x^{2} + 43451484660738 x - 4330747   \end{aligned}$ & \cite[Table 4]{GHR19}\\ \hline
\end{tabular}
\end{center}
\end{table}
\newpage 
\begin{table}[h!]
\begin{center}
\footnotesize
\setlength{\extrarowheight}{0.5pt}
\caption{Curves whose geometric endomorphism algebra is $\M_2(\Q(\sqrt{-d}))$ with $d\in\Dm_2$ }\label{table:M2 of class number 2}
\vspace{6pt}
\begin{tabular}{lll}
  $d$   & Curve & Source\\
  \hline\\
 15    & $y^2=x^6+x^3+\frac{1}{20}$ &  \cite[p. 112]{Car01}\\  \hline\\
  20    & $y^2=x^5+ 5x^3+ 5x$ &  \cite[Table 4]{GHR19}\\  \hline\\
  24    &  $y^2= 3 x^{5} + 8 x^{3} + 6 x $ &\cite[Table 4]{GHR19}\\  \hline\\
  35    & $y^2= 83 x^6 - 184 x^5 + 405 x^4 + 80 x^3 - 255 x^2 + 136 x + 127$ &  Remark \ref{remark: classnumber2}\\  \hline\\
  40    &  $y^2= 9 x^{5} + 40 x^{3} + 45 x$ &  \cite[Table 4]{GHR19}\\  \hline\\
  51    & $y^2=x^6+x^3+\frac{4}{17}$ & \cite[p. 112]{Car01}\\  \hline\\
 52   &  $y^2= 9 x^{5} + 65 x^{3} + 117 x$ & \cite[Table 4]{GHR19} \\  \hline\\
  88    & $y^2= 99 x^{5} + 280 x^{3} + 198 x $ & \cite[Table 4]{GHR19}\\  \hline\\
  91    & $\begin{aligned}y^2=&\ 8477 x^6 + 11340 x^5 - 30513 x^4   \\&  -  15336 x^3  42135 x^2 - 11124 x + 1637 \end{aligned}$ & Remark \ref{remark: classnumber2}\\  \hline\\
  115    & $\begin{aligned}y^2=&\  2647 x^6 - 5736 x^5 + 7545 x^4 + 1520 x^3  \\&  - 4995 x^2 + 4824 x + 3483 \end{aligned}$ & Remark \ref{remark: classnumber2}\\  \hline\\
  123    & $y^2=x^6+x^3+\frac{256}{1025}$ & \cite[p. 112]{Car01}\\  \hline\\
  148    &$y^2= 441 x^{5} + 5365 x^{3} + 16317 x$  & \cite[Table 4]{GHR19}\\  \hline\\
  187    &$\begin{aligned}y^2=&\ 258093 x^6 + 519750 x^5 - 364518 x^4 -  1612000 x^3  \\&  + 286356 x^2 + 719000 x + 248616 \end{aligned}$&  Remark \ref{remark: classnumber2}\\  \hline\\
  232    & $y^2=9801 x^{5} + 105560 x^{3} + 284229 x $ & \cite[Table 4]{GHR19}\\  \hline\\
  235    & $\begin{aligned}y^2=&\ 63847 x^6 - 89256 x^5 - 89655 x^4 -   27280 x^3   \\&  + 135405 x^2 + 105624 x + 48843 \end{aligned}$ & Remark \ref{remark: classnumber2}\\  \hline\\
 267    &  $y^2=x^6+x^3+\frac{62500}{250001}$& \cite[p. 112]{Car01}\\  \hline\\
  403    & $\begin{aligned}y^2=&\ 19149677 x^6 - 28664274 x^5 + 11992662 x^4   \\& - 83358256 x^3  + 76850532 x^2 - 3824904 x + 53478712 \end{aligned}$ &  Remark \ref{remark: classnumber2}\\  \hline\\
  427    & $\begin{aligned}y^2=&\ 178773643 x^6 + 67725720 x^5 -  770837595 x^4  \\ & - 7030800 x^3  + 871950585 x^2 - 303837480 x + 27419167\end{aligned}$ & Remark \ref{remark: classnumber2}\\ \hline  
                                                             
\end{tabular}
\end{center}
\end{table}

\begin{table}
\begin{center}
\footnotesize
\setlength{\extrarowheight}{0.5pt}
\caption{Curves whose geometric endomorphism algebra is $\M_2(\Q(\sqrt{-d}))$ with $d\in\Dm_{2,2}^J$. The curves corresponding to discriminants 1012 and 1435 were provided to us by Narbonne--Ritzenthaler, and the rest were provided to us by Elkies. The column $n$ denotes the degree of the optimal map from the curve to an elliptic curve.}\label{table:M2 of class number 2 2}
\vspace{6pt}
\begin{tabular}{lll}
  
$d$ & $n$  & Curve\\ \hline\\
  84 & $3$  & $y^2=(2x^2-6x-9)(4x^4-24x^3-4x-3)$  \\ \hline\\
  120 & $6$  & $y^2=(2x^2-2x-1)(6x^2-20x+25)(18x^2+24x+5)$  \\ \hline\\
  132 & $5$  & $y^2 = (x^2 - 2x - 10) (121x^4 + 572x^3 + 176x^2 + 40x + 4)$\\ \hline\\
  168 & $8$  & $y^2=(2x^2+26x-3)(14x^2-16x+1)(14x^2+28x+23)$ \\ \hline\\
  228 & $5$  & $y^2=(x^2+2x-2)(15625x^4+62500x^3-5832x+2916)$  \\ \hline\\
  372 & $7$  & $\begin{aligned}y^2=&\ 1771561 x^{6} - 10629366 x^{5} + 10629366 x^{4} + 10797488 x^{3}\\& + 5062500 x^{2} + 10125000 x + 3375000\end{aligned}$  \\ \hline\\
  408 & $10$  & $\begin{aligned}y^2 =&\ 350354606 x^{6} - 590700831 x^{5} - 816200091 x^{4} + 1046797684 x^{3}\\& + 824978466 x^{2} - 502917081 x - 378640481\end{aligned}$ \\ \hline\\
  435 & $7$  & $\begin{aligned}y^2 =&\ 10466471x^6 + 11098203x^5 - 95645550x^4 - 106834905x^3\\&        + 95572050x^2 - 49559037x + 7237649\end{aligned}$ \\ \hline\\
  520 & $6$  & $y^2=(19x^2+54x+37)(319x^2-444x+171)(445083x^2+396482x-144363)$ \\ \hline\\
  532 & $3$  & $ y^2=(50251x^2 + 27624x - 17865) (29x^4 - 128x^3 + 114x^2 + 160x - 75)$ \\ \hline\\
  708 & $11$  & $y^2 = (6655x^2 - 3278x - 1966) (14225761x^4 + 9092252x^3 - 47672800x^2 + 48112648x + 6781636)$ \\ \hline\\
  1012 & $5$  & $y^2 = -(x^2 - 1350x + 2326)(8385623x^4 + 15948252x^3 - 44642960x^2 - 12762360x - 1510324)$ \\ \hline\\
  1435 & $11$ & $\begin{aligned} y^ 2  =&\ 539468307211x^6 + 8859839654637x^5 +  71566557542400x^4  + 152349804189405x^3 \\&- 44616942974400x^2 -  3819661212483x + 1228993875229\end{aligned}$\\
  \hline
 \end{tabular}
\end{center}
\end{table}

\clearpage


\begin{thebibliography}{McK-Sta}

\bibitem[BCP97]{BCP97} W. Bosma, J. Cannon, and C. Playoust. The Magma algebra system. I. The user language. J. Symbolic Comput., 24(3-4):235–265, 1997. Computational algebra and
number theory (London, 1993).

\bibitem[BC]{BC} A. Bourdon, P. Clark, \emph{Torsion points and Galois representations on CM elliptic curves}. To appear in Pacific J. Math.
 
\bibitem[BHLS15]{BHLS15} R. Br\"oker, E. Howe, K. Lauter, P. Stevenhagen, \emph{Genus-2 curves and Jacobians with a given number of points}. LMS Journal of Computation and Mathematics, 18(1) (2015), 170--197.   
  
\bibitem[Car01]{Car01} G. Cardona, \emph{Models racionals de corbes de g\`enere 2}, Tesi Doctoral, Universitat
Polit\`ecnica de Catalunya, 2001, available on \href{https://www.researchgate.net/publication/246857311_Models_racionals_de_corbes_de_genere_2}{https://www.researchgate.net}.

\bibitem[CNP05]{CNP05} G. Cardona, E. Nart,and J. Pujol\`as, \emph{Curves of genus two over fields of even characteristic}, Math. Z. \textbf{250} (2005), 177--201.

\bibitem[CQ05]{CQ05} G. Cardona and J. Quer, \emph{Field of moduli and field of definition for curves of genus 2}, in Computational aspects of algebraic curves, Lecture Notes Ser. Comput. \textbf{13}, World Sci. Publ. (2005), 71--83.
  
\bibitem[Cox98]{Cox89} D. A. Cox, \emph{Primes of the form $x^2 + ny^2$}, Fermat, class field theory and complex
multiplication, A Wiley-Interscience Publication, John Wiley \& Sons, Inc., New York,
1989.
  
\bibitem[FKRS12]{FKRS12}
F. Fit\'e, K.S. Kedlaya, V. Rotger, and A.V. Sutherland,
\emph{Sato--Tate distributions and Galois endomorphism modules in genus $2$},
\textit{Compos. Math.} \textbf{148} (2012), 1390--1442.

\bibitem[FG18]{FG18} F. Fité and X. Guitart, \emph{Fields of definition of elliptic $k$-curves and the realizability of all genus 2 Sato--Tate groups over a number field}. Trans. Amer. Math. Soc., \textbf{370}(7) (2018), 4623--4659.

\bibitem[FG20]{FG20} F. Fit\'e, X. Guitart, \emph{Endomorphism algebras of geometrically split abelian surfaces over $\Q$}, Algebra \& Number Theory 14-6 (2020), 1399--1421.

\bibitem[FK91]{FK91} G. Frey, E. Kani, \emph{Curves of genus 2 covering elliptic curves and an arithmetical application}, In: G. van der Geer., F. Oort, J. Steenbrink (eds.), Arithmetic Algebraic Geometry. Progress in Mathematics, vol \textbf{89}. Birkh\"auser, Boston, MA, 1991.

\bibitem[GHR19]{GHR19} A. G\'elin, E. Howe, C. Ritzenthaler, \emph{Principally polarized squares of elliptic curves with field of moduli equal to $\Q$}. Proceedings of the Thirteenth Algorithmic Number Theory Symposium, 257--274, Open Book Ser., 2, Math. Sci. Publ., Berkeley, CA, 2019.

\bibitem[Hei34]{Hei34} 
H. Heilbronn, \emph{On the class number in imaginary quadratic fields}, Quart. J. Math.
Oxford Ser. 2 \textbf{5} (1934), 150--160.

\bibitem[HLP00]{HLP00} E. Howe, F. Lepr\'evost, B. Poonen,  \emph{Large torsion subgroups of split Jacobians of curves of genus two or three.} Forum Math. \textbf{12} (2000), no. 3, 315--364.

\bibitem[JKPRST18]{JKPRST18} B. Jordan, A. Keeton, B. Poonen, E. Rains, N. Shepherd-Barron, and J. Tate, \emph{Abelian varieties isogenous to a power of an elliptic curve}, Compositio Mathematica, \textbf{154}(5), 934--959 (2018).

\bibitem[Kan11]{Kan11} E. Kani, \emph{Products of CM elliptic curves}, Collect. Math. \textbf{62} (2011), 297--339.

\bibitem[Kuh88]{Kuh88} R. M. Kuhn, \emph{Curves of genus 2 with split Jacobian},
Trans. Amer. Math. Soc. \textbf{307} no. 1 (1988), 41--49.

\bibitem[Kum15]{Kum15} A. Kumar, \emph{Hilbert modular surfaces for square discriminants and elliptic subfields of genus 2 function fields}, Res. Math. Sci. \textbf{2} Art. 24 (2015), 46 pages.

\bibitem[Lau01]{Lau01} K. Lauter, with an Appendix by J.-P. Serre, \emph{Geometric Methods for Improving the Upper Bounds on the Number of Rational Points on Algebraic
Curves over Finite Fields}, Journal of Algebraic Geometry 10 (2001), no. 1, 19--36.

\bibitem[MSV09]{MSV09}
K. Magaard, T. Shaska, and H. V\"olklein, \emph{Genus 2 curves that admit a degree 5 map to an elliptic curve}, Forum Math. \textbf{21} no. 3 (2009), 547--566.


\bibitem[Mes91]{Mes91}
J.-F. Mestre, \emph{Construction de courbes de genre 2 à partir de leurs
modules}. In Effective methods in algebraic geometry (Castiglioncello, 1990),
pages 313--334. Birkhäuser Boston, Boston, MA, 1991.

\bibitem[Nak04]{Nak04}
T. Nakamura, \emph{A classification of $\Q$-curves with complex multiplication}, J. Math. Soc. Japan \textbf{56}:2 (2004), 635--648.

\bibitem[Nar22]{Nar22} F. Narbonne, \emph{Polarized products of elliptic curves with complex multiplication and field of moduli $\Q$}, \href{https://arxiv.org/abs/2203.11982}{arXiv:2203.11982}.

\bibitem[S\etalchar{+}09]{sage}
W.\thinspace{}A. Stein et~al., \emph{{S}age {M}athematics {S}oftware
({V}ersion 8.2)}, The Sage Development Team, 2018, {\tt
http://www.sagemath.org}.

\bibitem[Sha96]{Sha96}
I. R. Shafarevich, \emph{On the arithmetic of singular K 3-surfaces}, pp. 103--108 in Algebra and analysis (Kazan, Russia, 1994), edited by M. M. Arslanov et al., de Gruyter, Berlin, 1996.

\bibitem[Sha04]{Sha04}
T. Shaska, \emph{Genus 2 fields with degree 3 elliptic subfields}, Forum Math. \textbf{16} (2004), no. 2, 263--280.

\bibitem[SWWW08]{SWWW08}
T. Shaska, G. S. Wijesiri, S. Wolf, L. Woodland, \emph{Degree 4 coverings of elliptic curves by genus 2 curves}, Albanian J. Math. \textbf{2} no. 4 (2008), 307--318.

\bibitem[Sil94]{Sil94} J. H. Silverman, 
\emph{Advanced topics in the arithmetic of elliptic curves.}  
Graduate Texts in Mathematics \textbf{151}, Springer-Verlag, New York, 1994. xiv+525 pp. ISBN: 0-387-94328-5. 

\bibitem[Ste01]{Ste01} P. Stevenhagen, \emph{Hilbert’s 12th problem, complex multiplication and Shimura reciprocity. Class field theory
– its centenary and prospect} (Tokyo, 1998), 161-176, Adv. Stud. Pure Math. \textbf{30}, Math. Soc. Japan, Tokyo,
2001.

\bibitem[Wat03]{Wat03}
M. Watkins, \emph{Class numbers of imaginary quadratic fields}, Math. Comp. \textbf{73} (2004), no. 246, 907--938.

\bibitem[Wei56]{Wei56} 
A. Weil, \emph{The field of definition of a variety}, Am. J. Math \textbf{78} (1975), 509--524.

\bibitem[Wei57]{Wei57} 
A. Weil, \emph{Zum Beweis des Torellischen Satzes}, volume 2 of Nachrichten der
Akademie der Wissenschaften in Göttingen, Mathematisch-Physikalische Klasse. Vandenhoeck \& Ruprecht, 1957.

\end{thebibliography}
\end{document}